%
%
%
\documentclass{amsproc}
\usepackage{xcolor}
\usepackage{amssymb}
\newtheorem{theorem}{Theorem}[section]
\newtheorem{lemma}[theorem]{Lemma}
\newtheorem{op}[theorem]{Open}

\theoremstyle{definition}
\newtheorem{definition}[theorem]{Definition}
\newtheorem{example}[theorem]{Example}

\theoremstyle{remark}
\newtheorem{remark}[theorem]{Remark}
\newtheorem{note}[theorem]{Note}
\numberwithin{equation}{section}
\usepackage{epsfig} 
\usepackage{epstopdf} 









\newcommand{\te}{\text}

\newcommand{\ep}{\epsilon}




\begin{document}

\title[Some results on Lower Assouad and quantization dimensions]{Some results on Lower Assouad and quantization dimensions}



\author{Saurabh Verma}
\address{Department of Applied Sciences, IIIT Allahabad, Prayagraj, India 211015}
\email{saurabhverma@iiita.ac.in}
\thanks{The first author is supported by the SEED Grant Project of IIIT Allahabad, India. The second and third authors are financially supported by the Ministry of Education, India.}

\author{Ekta Agrawal}
\address{Department of Applied Sciences, IIIT Allahabad, Prayagraj, India 211015}
%
\email{ekta.agrawal5346@gmail.com}

\author{Shivam Dubey}
\address{Department of Applied Sciences, IIIT Allahabad, Prayagraj, India 211015}
%
\email{rss2022509@iiita.ac.in}

\subjclass[2020]{Primary 28A80; Secondary 28A78}
\date{January 1, 1994 and, in revised form, June 22, 1994.}


\keywords{Separation condition, self-similar sets, invariant measures, Lower Assouad dimension, Quantization dimension}

\begin{abstract}
In this paper, we first show that the collection of all subsets of \( \mathbb{R} \) having lower dimension \( \gamma \in [0,1] \) is dense in \( \Pi(\mathbb{R}) \), the space of compact subsets of \( \mathbb{R} \). Furthermore, we show that the set of Borel probability measures with lower dimension \( \beta \in [0, m] \) is dense in \( \Omega(\mathbb{R}^m) \), the space of Borel probability measures on \( \mathbb{R}^m \). We also prove that the quantization and the lower dimension of a measure \( \vartheta \) coincide with those of the convolution of \( \vartheta \) with a finite combination of Dirac measures. In the end, we compute the lower dimension of the invariant measure associated with the product IFS.

\end{abstract}

\maketitle

\section{Introduction}
The dimension analysis of any set and measure in $\mathbb{R}^m$ is a fundamental part of fractal geometry. Several dimensions are introduced in the literature, such as Hausdorff dimension $\dim_\mathcal{H}(\cdot),$ box dimension $\dim_B(\cdot),$ packing dimension $\dim_P(\cdot),$ quantization dimension $D_r(\cdot),$ Assouad dimension $\dim_A(\cdot),~L^q$ dimension $D(\cdot,q),$ and lower Assouad dimension $\dim_L(\cdot)$ (see \cite{Fal, Fraser, KNZ1, Mat3, Sh1, Zador}), while some gave insight at the global level, others at the local level. In general, determining the dimension of a fractal set is a challenging task; To address this, certain separation conditions are introduced in the literature, such as the strong separation condition (SSC), open set condition (OSC), strong open set condition (SOSC), weak separation property (WSP) and exponential separation condition (ESC), see \cite{Fraser, Schief, VP1}. Hochman \cite{MHOCHMAN} computed the Hausdorff dimension for self-similar sets and measures with algebraic contraction and translation on $\mathbb{R}$ under ESC, contributing significantly toward addressing Simon's dimension drop conjecture (see \cite{Simon}). Shmerkin \cite[Theorem 6.6]{Sh1} computed the $L^q$ dimension of self-similar measures in $\mathbb{R}$ under ESC. Several researchers attempted to introduce ESC for the general class of IFS, in a way B\'ar\'any and M. Verma \cite{BV1} studied weak ESC for self-similar IFS, ESC for graph-directed self-similar IFS, and ESC for common fixed point systems. Recently, Verma et al. \cite{VAM} defined modified ESC for the general class of IFS, including self-affine, bi-Lipschitz, and self-conformal IFS. Some recent work on the estimation of the dimension of inhomogeneous attractors, graphs of continuous functions, and self-similar sets can be explored in \cite{B, Bayart, Bed, SS3, DV1, DV2, DV3, PPV1, MV1}. The present article deals with the lower Assouad dimension and the quantization dimension.\\
Larman \cite{Larman}, and later K\"aenm\"aki and Lehrb\"ack \cite{AL} introduced the lower Assouad dimension, which is basically dual to the Assouad dimension and discloses the local behavior. It is noted that for the subset of $\mathbb{R}^m$ that contains isolated points, its lower dimension is zero. Fraser \cite[Example 2.5]{Fraser12} presented an open set in $\mathbb{R}$ with a lower dimension equal to zero, showing that it does not satisfy the open set property, while the Hausdorff, packing, and Assouad dimensions satisfy it. Chen et al. \cite[Theorem 1]{Chen} showed the accessible value of the lower dimension of the subsets in a doubling metric space. Generally, if $F$ is a totally bounded subset of $\mathbb{R}^m,$ then
$$\dim_L(F)\le \underline{\dim}_B(F)\le\overline{\dim}_B(F)\le\dim_A (F),$$
where $\underline{\dim}_B$ and $\overline{\dim}_B$ represent the lower and upper box dimensions, respectively. Larman \cite{Larman} showed that if $F$ is a compact subset of $\mathbb{R}^m,$ then 
$$\dim_L(F)\le \dim_{\mathcal{H}}(F)\le\dim_P(F)\le\dim_A(F).$$
If $F$ is compact and Ahlfors regular, then $\dim_L(F)=\dim_\mathcal{H}(F)=\dim_A(F)$ (see \cite[Theorem 6.4.1]{Fraser}). 
K\"aenm\"aki et al. \cite{AJM} proved that if a measure $\vartheta$ is doubling and fully supported on a closed set $F,$ then 
$$\dim_L(\vartheta)\le\dim_L(F)\le\dim_A(F)\le\dim_A(\vartheta).$$
Graf and Luschgy \cite{GL1} showed that for any probability measure $\vartheta$ on $\mathbb{R}^m,$ we have $\dim_\mathcal{H}(\vartheta)\le D_r(\vartheta),$ for all $r\ge 1.$ As a starting point, researchers deal with the simplest case: for a given class of self-similar measures, whether the quantization dimension coincides with the Hausdorff dimension of measure under some appropriate separation condition. Graf and Luschgy \cite[Theorem 5.11]{GL3} proved that it is true under OSC. Kesseböhmer and Zhu \cite{KZ2} established a formula for the quantization dimension of a special class of self-affine measures supported on Bedford-McMullen carpets (see \cite{Fraser1}), and recently in \cite{K2}, Kesseböhmer and Niemann investigated the quantization dimension of negative order. Further investigation of the quantization dimension of invariant measures associated with conformal mappings, bi-Lipschitz mappings, can be explored in \cite{KPot, PVV1, Zhu1}.\\
Verma and Massopust \cite{VM1} introduced the approximation of any continuous function that preserves the dimensions (Hausdorff dimension and packing dimension) of the graphs. Later, some dense subsets of the set of all compact subsets of $\mathbb{R}^m$ w.r.t. the Hausdorff distance preserving the Assouad dimension are explored in \cite{VAM,VP1}. Mauldin and Williams \cite{Mauldin}, and later Liu and Zu \cite{Liu} commented on the decomposition of a continuous function that preserves the Hausdorff dimension of the graphs. The analogy of the previous for the set-valued function is illustrated in \cite{AV2}. Some recent work on Hausdorff and other fractal dimensions and set-valued fractal functions with applications can be seen in \cite{AV1,AV2,GP1, KBV11,KBV12,RSP,VAM,VP1}. Megala and Prasad \cite{MP1,MP2,MP3} studied the spectrum of self-affine invariant measures. It is still open whether the decomposition of a set, a continuous function, and a measure preserving the lower dimension.

\subsection{Delineation} In this paper, we study the density properties of various subsets of $\Pi(X)$, the collection of all compact subsets of a complete metric space $X$. Let $\Omega(X)$ denote the collection of all Borel probability measures on $X$, where $X$ is a Euclidean space $\mathbb{R}^m$ of dimension $m$ with Euclidean metric. We show that for any $0 \le \beta \le m$, the set of Borel probability measures with lower dimension $m$ forms a dense subset of $\Omega(X)$. Simultaneously, we demonstrate that the set of Borel probability measures with quantization dimension $\beta$ is also dense in $\Omega(X)$. Furthermore, in Subsection~\ref{subsec1}, we prove that the lower dimension of the invariant measure associated with a product IFS is equal to the sum of the lower dimensions of the invariant measures corresponding to the individual IFSs involved in the product.
\\

\begin{remark}
Sometimes, the study of complex measures can be reduced to the analysis of simpler ones, such as Dirac measures or smooth measures, which are often more tractable analytically. Approximating a given measure by such simpler measures plays a crucial role, particularly when the goal is to preserve important geometric features, such as dimensional characteristics. This motivates us to approximate a given measure $\vartheta$ by convolving it with a finite combination of Dirac measures, while ensuring that its dimensional properties are retained.
\end{remark}
 
\section{Preliminaries} \label{Sec2}
In this section, we recall some definitions and basic results for our paper. For any $F\subseteq\mathbb{R}^m,$ we denote $|F|=\sup_{x,y\in F}\|x-y\|_2$ and $\text{Card}(F)$ as the diameter and cardinality of $F$, respectively. \\
Let $\Pi(\mathbb{R}^m)$ denote the collection of all compact nonempty subsets of $\mathbb{R}^m$ with the Euclidean norm $\|\cdot\|_2$. The Hausdorff distance $\mathfrak{H}$ between any $E,F\in\Pi(\mathbb{R}^m)$ is elaborated as 
$$\mathfrak{H}(E,F):=\inf\{\epsilon>0: E\subset F_\epsilon \text{ and } F\subset E_\epsilon\},$$ where $E_\epsilon$ denotes the $\epsilon$-neighbourhood of $E.$
\begin{definition}\cite{Fraser}
    Let $E$ be a nonempty subset of $\mathbb{R}^m.$ The lower dimension of $E$ is defined as
    \begin{align*}
       \dim_{L}(E):=\sup&\Big\{\beta: \exists ~M>0 \text{ such that,} \forall ~0<r<R\le|E| \text{ and }x\in E, \\& \hspace{4cm}N_r(\mathcal{B}_R(x)\cap E)\ge M\Big(\frac{R}{r}\Big)^\beta\Big\},
    \end{align*}
where $\mathcal{B}_R(x)=\{y\in\mathbb{R}^m:\|y-x\|_2\le R\}$ and $N_r(\mathcal{B}_R(x)\cap E)$ is the smallest number of open set required for a $r$-cover of $\mathcal{B}_R(x)\cap E.$    
\end{definition}
The properties of lower dimension can be visited in the book \cite{Fraser}. It is noted that the lower dimension only satisfies stability under closure and bi-Lipschitz mapping.
\begin{definition}\cite{Fraser}
 Let $\vartheta$ be a Borel probability measure on $\mathbb{R}^m$ and $\text{supp}(\vartheta)$ denote the support of $\vartheta.$ Then the lower dimension of $\vartheta$ is defined by
 \begin{align*}
\dim_L(\vartheta)=\sup\Bigg\{&\beta\ge0:\exists ~M>0 \text{ such that,} \forall ~0<r<R\le|E| \text{ and }x\in \text{supp}(\vartheta),\\& \hspace{5.8cm}\frac{\vartheta(\mathcal{B}_R(x))}{\vartheta(\mathcal{B}_r(x))}\ge M\Big(\frac{R}{r}\Big)^\beta\Bigg\},
 \end{align*}
otherwise it is $0.$
    \end{definition}
We would like to mention here that if a measure $\vartheta$ is doubling and fully supported on a closed set $F,$ then $\dim_L(\vartheta)\le \dim_L(F),$
for details, see \cite[Lemma 4.1.2]{Fraser}.
\begin{definition}\cite{GL1}
    Let $\vartheta$ be a Borel probability measure on $\mathbb{R}^m,~r \in (0, +\infty)$ and $n \in \mathbb{N}.$  Then, the $n$th quantization error of order $r$ of $\vartheta$ is stated as:
    \[V_{n, r}(\vartheta):=\text{inf} \Big\{\int d(x, A)^r d\vartheta(x): A \subset \mathbb{R}^m, \ \text{Card}(A) \leq n\Big\},\]
    where $d(x,A)$ denotes the distance of point $x \in \mathbb{R}^m$ to the set $A$ with respect to the Euclidean norm $\|\cdot\|_2$ on $\mathbb{R}^m.$
    The quantization dimension of order $r$ of $\vartheta$ is defined by:
    $$ D_r(\vartheta):= \lim_{n\to \infty} \frac{r \log n}{- \log(V_{n,r}(\vartheta))},$$
    if the limit exists, otherwise, we define the lower and upper quantization dimensions of order $r$ of $\vartheta$ by taking the limit inferior and limit superior of the sequence, respectively.
\end{definition}
\begin{definition}\cite{GL1}
    For a given $s>0,$ the $s$-dimensional lower and upper quantization coefficient of order $r$ for $\vartheta$ is defined by:
    $$\liminf_{n \to \infty} n^{\frac{r}{s}}V_{n,r}(\vartheta),~ \text{and}~ \limsup_{n \to \infty}  n^{\frac{r}{s}}V_{n,r}(\vartheta),$$
    respectively.
\end{definition}
Let $\Omega(\mathbb{R}^m)$ denote the set of all Borel probability measures on the Euclidean space $(\mathbb{R}^m,\|.\|_2)$. Then,
\begin{equation*}
\begin{aligned}
d_L (\vartheta, \theta) :=\sup_{h\in\te{Lip}_1(\mathbb{R}^m)} \Big\{\Big|\int_{\mathbb{R}^m} h d\vartheta -\int_{\mathbb{R}^m} hd\theta\Big|\Big\}, \ \ \quad \vartheta, \theta \in \Omega(\mathbb{R}^m) , \end{aligned}
\end{equation*}
defines a metric on $\Omega(\mathbb{R}^m)$, where $\te{Lip}_1(\mathbb{R}^m)=\{h:\mathbb{R}^m\rightarrow \mathbb{R}: h\te{ is Lipschitz function}\\ \te{with Lipschitz constant }\le1\}$ (see \cite{Mat,Parth}). If $(X,d)$ is a compact metric space, then
$(\Omega(X), d_L)$ is a compact metric space (see \cite[Theorem~5.1]{B}).
\par



\begin{definition}
Let $\vartheta, \theta \in \Omega(\mathbb{R}^m).$ The convolution of $\vartheta$ and $\theta$ is denoted by $\vartheta *\theta$ and defined by the push-forward of the product measure $\vartheta \times \theta$ under the mapping $(x,y)\to x+y.$ That is, for any $E \subset \mathbb{R}^m$, we have $$\vartheta * \theta (E)= (\vartheta \times \theta )\big(\{(x,y) \in \mathbb{R}^m\times \mathbb{R}^m: x+y \in E\}\big).$$
\end{definition}
\begin{definition}
Let $\vartheta \in \Omega(\mathbb{R}^m)$ and $x \in \mathbb{R}^m.$ The translation of $\vartheta$ by $x$ is defined as 
\[
(\vartheta+x)(E)=\vartheta(E+x),~~~E \subset \mathbb{R}^m.
\]
\end{definition}

Let us define
\[
\mathcal{L}(\mathbb{R}^m)=\Big\{\vartheta 
\in \Omega(\mathbb{R}^m): \vartheta= \sum_{i=1}^k a_i \delta_{x_i},~x_i \in \mathbb{R}^m, ~a_i > 0, \sum_{i=1}^k a_i=1,~~ k \in \mathbb{N} \Big\}.
\]
Here $\delta_x$ denotes the Dirac measure supported at $x \in \mathbb{R}^m.$ It is observed that $\mathcal{L}(\mathbb{R}^m)$ is dense in $\Omega(\mathbb{R}^m)$ w.r.t. $d_L$ on $\Omega(\mathbb{R}^m),$ see \cite{Bill}. The existence of an invariant measure supported on the attractor of the associated iterated function system is shown in \cite{H}. Feng et al. \cite{Feng} studied the convolutions of equicontractive self-similar measures on $\mathbb{R}.$ 
We define essential supremum norm $\|.\|_{\mathcal{L}^{\infty}}$ of a measurable function $g$ with respect to a measure $\vartheta$ by 
\[
 \|g\|_{\mathcal{L}^{\infty}}= \inf \{C\ge 0 : |f(x)| \le C ~\text{for $\vartheta$-almost every x} \}.
\]
Let $\vartheta ,\lambda \in \Omega( \mathbb{R}^m)$ be such that $\vartheta$ is absolutely continuous with respect to $\lambda.$ Then by Radon-Nikodym theorem, there exists a measurable function $g: \mathbb{R}^m \to [0, \infty) $ such that $$ \vartheta(A)= \int_A g d\lambda,~~~\text{for any Borel set}~A \subset \mathbb{R}^m.$$
Further, the function $g$ is called the Radon-Nikodym derivative or density of $\vartheta$ with respect to $\lambda$, and is denoted by $\frac{ d\vartheta}{ d\lambda}.$
We call $\vartheta$ has $\mathcal{L}^{\infty}$-density if it is absolutely continuous with respect to $\lambda$, and density function $g$ satisfying
 $\|g\|_{\mathcal{L}^{\infty}} < \infty.$  
\par
Now we are ready to prove the upcoming result. Here and throughout the paper, we assume that $D_r(\vartheta)$ exists whenever it occurs.

\section{Main Results}
In the subsequent theorems, for the reader's better understanding, as well as for simplicity and clarity, we present the proof on $\mathbb{R}$, where the main ideas are more transparent. The results can be extended to $\mathbb{R}^m$ with suitable modifications, following a similar technique.
\begin{theorem} \label{thm3.1}
    For each $\gamma\in[0,1],$ the set $L^\gamma=\{A\in \Pi(\mathbb
    R): \dim_L(A)=\gamma\}$ is dense in $\Pi(\mathbb
    R).$ 
\end{theorem}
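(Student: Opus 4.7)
The plan is to approximate $F\in\Pi(\mathbb{R})$ by finitely many small translated rescaled copies of a prototype compact set $C_\gamma\subseteq[0,1]$ with $0\in C_\gamma$ and $\dim_L(C_\gamma)=\gamma$. For each $\gamma$ I would fix such a prototype once and for all: $C_0=\{0\}$, $C_1=[0,1]$, and for $\gamma\in(0,1)$ the self-similar attractor of $\{x\mapsto rx,\,x\mapsto rx+(1-r)\}$ with $r=2^{-1/\gamma}<1/2$; this satisfies the SSC, has similarity dimension $\gamma$, and being Ahlfors regular has $\dim_L(C_\gamma)=\gamma$. Given $F$ and $\epsilon>0$, I would extract an $\epsilon/2$-separated, $\epsilon/2$-dense finite subset $\{x_1,\dots,x_n\}\subseteq F$, choose a scale $\delta>0$ with $\delta\le\epsilon/2$ and $3\delta<d$, where $d$ is the minimum pairwise distance among the $x_i$, and set $A:=\bigcup_{i=1}^n(x_i+\delta C_\gamma)\in\Pi(\mathbb{R})$. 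Since $0\in C_\gamma\subseteq[0,1]$, one immediately obtains $\mathfrak{H}(A,F)\le\max(\epsilon/2,\delta)<\epsilon$; the degenerate case $n=1$ is handled by placing a single copy at the unique net point.

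To verify $\dim_L(A)=\gamma$, the upper bound follows from Larman's inequality $\dim_L(A)\le\dim_\mathcal{H}(A)$ (recalled in the preliminaries) together with the stability of Hausdorff dimension under finite unions and scaling, giving $\dim_\mathcal{H}(A)=\dim_\mathcal{H}(C_\gamma)=\gamma$. For the lower bound I would fix $y\in A$, say $y\in x_1+\delta C_\gamma$, and $0<r<R\le|A|$, and produce a uniform $M>0$ with $N_r(\mathcal{B}_R(y)\cap A)\ge M(R/r)^\gamma$. The separation $3\delta<d$ ensures that at any scale $R'<d/2$ the intersection $\mathcal{B}_{R'}(y)\cap A$ coincides with $\mathcal{B}_{R'}(y)\cap(x_1+\delta C_\gamma)$, so the lower-dimension estimate for $C_\gamma$ transfers directly to a single copy (with a scale-invariant constant $M_0>0$). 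Splitting into the regimes $R\le\delta$, $r<\delta<R$, and $r\ge\delta$, I would apply in turn the lower-dim estimate for $C_\gamma$, the same estimate used at scale $\delta$ together with the inclusion $\mathcal{B}_R(y)\cap A\supseteq x_1+\delta C_\gamma$, and the trivial bound $N_r\ge1$ combined with $R/r\le|A|/\delta$; these three regimes combine to yield a uniform positive $M$.

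The main obstacle is precisely this lower bound. Since lower dimension is neither monotone under subsets nor under disjoint unions (adding isolated points to an interval, for instance, drops its lower dimension to $0$), one cannot simply appeal to the fact that each copy $x_i+\delta C_\gamma$ has lower dimension $\gamma$. What makes the argument go through is the quantitative choice $3\delta<d$ together with the compactness bound $R\le|A|<\infty$: the small scale $\delta$ remains ``visible'' through one copy of $C_\gamma$ at every $y\in A$, the copies stay genuinely disjoint so that one copy already supplies the desired count, and the ratio $R/r$ is controlled in the trivial regimes by the fixed positive constant $\delta/|A|$.
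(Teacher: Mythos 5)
Your proof is correct, and its skeleton is the same as the paper's: replace $F$ by a finite net and plant a small, well-separated copy of a compact set of lower dimension $\gamma$ at each net point. The difference is in what you prove versus what the paper cites. The paper obtains the $\gamma$-dimensional seed set from Chen--Wu--Wu's accessibility theorem and disposes of the union in one line via Fraser's Lemma~3.4.9 (lower dimension of a positively separated finite union is the minimum); you instead build an explicit Ahlfors-regular prototype $C_\gamma$ (a two-map self-similar set with ratio $2^{-1/\gamma}$ under the SSC) and verify the union's lower bound by hand with the three-regime analysis $R\le\delta$, $r<\delta\le R$, $r\ge\delta$. Your version is therefore self-contained where the paper is citation-driven, and your diagnosis of the real difficulty is exactly right: lower dimension is not finitely stable, so the separation $3\delta<d$ and the compactness bound $R\le|A|$ are what make a single copy of $C_\gamma$ carry the covering count at every location and scale. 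The case analysis checks out (in the regime $r<\delta\le R$ the inclusion $\mathcal{B}_R(y)\cap A\supseteq x_1+\delta C_\gamma$ together with $\delta/R\ge\delta/|A|$ gives the constant $M_0(\delta/|A|)^\gamma$, and the trivial regime contributes the same factor), so the argument is complete; the only cost relative to the paper is length, and the only thing you lose is generality of the seed set, which is irrelevant here since any one prototype per $\gamma$ suffices.
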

\begin{proof}
Let $A \subseteq \mathbb{R}$ be a compact set. Then, for any $\epsilon > 0$, there exist points $a_1, a_2, \ldots, a_n \in A$ and open balls $\mathcal{B}_\epsilon(a_1), \mathcal{B}_\epsilon(a_2), \ldots, \mathcal{B}_\epsilon(a_n)$ with radius $\epsilon$ such that $A \subset \bigcup_{j=1}^n \mathcal{B}_\epsilon(a_j)$, and $\mathcal{B}_\frac{\epsilon}{2}(a_i) \cap \mathcal{B}_\frac{\epsilon}{2}(a_j) = \emptyset$ for $i \ne j$. Define the finite set $A_* = \{a_1, a_2, \ldots, a_n\} \in \Pi(\mathbb{R})$, without any loss one may assume $a_1<a_2< \dots < a_n$. Since the lower dimension of a set with isolated points is zero (see \cite{Fraser}), then $\dim_L(A_*) = 0$. In view of the definition of the Hausdorff metric, we obtain  
\[
\mathfrak{H}(A, A_*) < \epsilon \quad \text{and} \quad \dim_L (A_*) = 0.
\]  
This establishes the claim for the case $ \gamma=0$.  
Now, assume $\gamma \in (0, 1]$. By \cite[Theorem 1]{Chen}, there exists a compact set $A_1 \subset \mathcal{B}_\frac{\epsilon}{2}(a_1)$ such that $\dim_L (A_1) = \gamma$. Since the lower dimension is translation invariant, we may assume without loss of generality that $a_1 \in A_1$.  
Choose $\delta = a_2 - a_1$, and consider the $\delta$-translation of $A_1$, denoted by $A_2$. Then $a_2 \in A_2$. Moreover, for any $y \in A_2$, there exists $x \in A_1$ such that  
\[
y = x + \delta = x + (a_2 - a_1).
\]  
It follows that  
\[
d(y, a_2) = |x + (a_2 - a_1) - a_2| = |x - a_1| < \tfrac{\epsilon}{2}.
\] Thus, $A_2 \subset \mathcal{B}_\frac{\epsilon}{2}(a_2)$ with $\dim_L (A_2) = \gamma$. Repeating this procedure for each $j$, we construct sets $A_j \subset \mathcal{B}_{\frac{\epsilon}{2}}(a_j)$ satisfying $\dim_L (A_j) = \gamma$ and $A_i \cap A_j =\emptyset$ for $i \ne j$.  
Define $A' = \bigcup_{j=1}^n A_j$. Since for $i \neq j$, $A_i$ and $A_j$ are disjoint, it follows that $\inf_{x \in A_i, y \in A_j} d(x,y)>0.$ Then, by \cite[Lemma 3.4.9]{Fraser}, we have $\dim_L (A') = \gamma,$ and by the definition of the Hausdorff metric and construction of the set $A$, we obtained $\mathfrak{H}(A, A') < \epsilon,$ which completes the proof.
\end{proof}

 \begin{remark}
The proof given in \cite[Theorem 3.8]{VAM} is not applicable in the case of the lower dimension, since, unlike the Assouad dimension, the lower dimension does not satisfy the open set and the finite stability property.
 \end{remark}
It is noted that in general lower dimension is not comparable to the Hausdorff dimension of set, as an example $\dim_L(\mathbb{Q})=1, ~\dim_\mathcal{H}(\mathbb{Q})=0$ and $\dim_L([0,1]\cup\{2\})=0, ~\dim_\mathcal{H}([0,1]\cup\{2\})=1$ (see \cite{Fraser}). Larman \cite{Larman} showed that if $F$ is a compact subset of $\mathbb{R}^m,$ then 
$\dim_L(F)\le \dim_{\mathcal{H}}(F).$ Further, if $F$ is compact and Ahlfors regular, then $\dim_L(F)=\dim_\mathcal{H}(F)$ (see \cite[Theorem 6.4.1]{Fraser}). Thus, it is worth mentioning next few results on the approximation of a compact set preserving the lower dimension and the Hausdorff dimension.  
 \begin{theorem}
    The set $\mathcal{S}_{\mathcal{H}L}=\{A\in\Pi(\mathbb{R}):\dim_\mathcal{H}(A)\ne\dim_L(A)\}$ is dense in $\Pi(\mathbb{R}).$ 
\end{theorem}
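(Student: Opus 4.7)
The plan is to produce, for each compact $A \in \Pi(\mathbb{R})$ and each $\epsilon > 0$, a compact set $A' \in \Pi(\mathbb{R})$ with $\mathfrak{H}(A, A') < \epsilon$ satisfying $\dim_L(A') = 0$ and $\dim_{\mathcal{H}}(A') = 1$. Since $0 \ne 1$, this places $A'$ in $\mathcal{S}_{\mathcal{H}L}$ and yields the density. The motivating example is the set $[0,1] \cup \{2\}$ already cited in the excerpt, which mixes a nondegenerate interval (for positive Hausdorff dimension) with an isolated point (to drive the lower dimension to $0$); my task is to place a small copy of this kind of hybrid set close to any prescribed $A$.

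First I would reuse the discretization step from the proof of Theorem~\ref{thm3.1}: cover $A$ by $\epsilon/4$-balls centered at finitely many points $a_1 < a_2 < \cdots < a_n$ of $A$, with disjoint $\epsilon/8$-balls. Then I would choose $\delta > 0$ strictly smaller than $\epsilon/4$ and than half of $\min_i (a_{i+1} - a_i)$, and set
\[
A' := [a_1,\, a_1 + \delta] \cup \{a_2, \ldots, a_n\}.
\]
In the exceptional case $n = 1$ (so that $A$ sits inside a single small ball), I would instead take $A' := [a_1, a_1 + \delta] \cup \{a_1 + \epsilon/2\}$, which still contains both an interval and an isolated point.

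Verifying $\mathfrak{H}(A, A') < \epsilon$ is routine: every point of $A'$ lies within $\max(\delta, \epsilon/2)$ of some $a_j \in A$, so $A' \subset A_\epsilon$; and every point of $A$ lies within $\epsilon/4$ of some $a_j \in A'$, so $A \subset A'_\epsilon$. For the dimensions, the inclusion of the nondegenerate interval $[a_1, a_1 + \delta]$ immediately gives $\dim_{\mathcal{H}}(A') \ge 1$, hence $=1$ since $A' \subset \mathbb{R}$, while the $\delta$-separation guarantees that each $a_j$ with $j \ge 2$ (or the auxiliary point in the $n=1$ case) is a genuine isolated point of $A'$. This forces $\dim_L(A') = 0$ by the same fact already invoked in Theorem~\ref{thm3.1}: for a fixed isolated $x$ and sufficiently small $R$, one has $\mathcal{B}_R(x) \cap A' = \{x\}$, so $N_r(\mathcal{B}_R(x) \cap A') = 1$ for every $r < R$, and no positive exponent $\beta$ can satisfy $1 \ge M(R/r)^{\beta}$ uniformly.

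The only real subtlety, and hence the main obstacle, is the calibration of $\delta$: it must be small enough that $[a_1, a_1 + \delta]$ neither escapes the $\epsilon/4$-ball around $a_1$ nor absorbs any of the points $a_2, \ldots, a_n$ (respectively the auxiliary point), so that the isolation used to force $\dim_L(A') = 0$ is unambiguous. Once this calibration is made, the strict inequality $\dim_{\mathcal{H}}(A') = 1 \ne 0 = \dim_L(A')$ is immediate. Note that, unlike Theorem~\ref{thm3.1}, this argument does not need the existence result \cite[Theorem 1]{Chen}, since only the two extremal dimensions $0$ and $1$ are required.
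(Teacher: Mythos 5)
Your proposal is correct and follows essentially the same route as the paper: the paper's proof takes $A_* = \mathcal{B}_{\epsilon}(a_1) \cup \{a_2,\ldots,a_n\}$, i.e., it fattens exactly one element of the finite cover into a set of Hausdorff dimension $1$ while keeping the remaining points isolated so that the lower dimension drops to $0$. Your version is in fact slightly more careful than the paper's, since you use a closed interval $[a_1,a_1+\delta]$ (so that $A'$ is genuinely compact, whereas the paper writes an open ball) and you treat the degenerate case $n=1$, in which the paper's $A_*$ would have no isolated point at all.
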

\begin{proof}
  Observe that the set $\mathcal{S}_{\mathcal{H}L}$ is non-empty, as the Bedford–McMullen carpet with non-uniform fibers has the Hausdorff dimension strictly greater than its lower dimension (see \cite[Theorem 2.1]{Fraser1}). Also, for any bounded closed interval $I$ and $a \in \mathbb{R}$ such that $a \notin I$, the Hausdorff dimension of $I \cup \{a\}$ is $1$, but the lower dimension is $0$. This concludes that the set $\mathcal{S}_{\mathcal{H}L}$ is non-empty.
Let $A \subset \mathbb{R}$ be a compact set. Then, for any $\epsilon > 0$, there exist points $a_1, a_2, \dots, a_n$ and open balls $\mathcal{B}_{\epsilon}(a_1), \mathcal{B}_{\epsilon}(a_2), \ldots, \mathcal{B}_{\epsilon}(a_n)$ such that $A \subset \bigcup_{j=1}^n \mathcal{B}_{\epsilon}(a_j)$, with $\mathcal{B}_{\frac{\epsilon}{2}}(a_i) \cap \mathcal{B}_{\frac{\epsilon}{2}}(a_j) = \emptyset$ for $i \neq j$. Define $A_* = \mathcal{B}_{\epsilon}(a_1) \cup \{a_2, \ldots, a_n\}$. Since the Hausdorff dimension satisfies the open set property and $A_*$ has isolated points, we have $\dim_\mathcal{H} (A_*) = 1$ and $\dim_L (A_*) = 0$. Furthermore, by the definition of $A_*$, we have $\mathfrak{H}(A, A_*) < \epsilon$. This completes the assertion.
 
\end{proof}
\begin{theorem}
    The set $\mathcal{D}=\{A\in\Pi(\mathbb{R}):\dim_\mathcal{H}(A) = \dim_L(A)\}$ is dense in $\Pi(\mathbb{R}).$ 
\end{theorem}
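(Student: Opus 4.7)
The plan is to mimic the finite-approximation construction used for the case $\gamma=0$ in the proof of Theorem \ref{thm3.1}, since any finite subset of $\mathbb{R}$ automatically belongs to $\mathcal{D}$. Given a compact set $A\subset\mathbb{R}$ and $\epsilon>0$, I would choose points $a_1,\ldots,a_n\in A$ such that the balls $\mathcal{B}_\epsilon(a_j)$ cover $A$, and then set $A_*=\{a_1,\ldots,a_n\}\in\Pi(\mathbb{R})$. By construction, $\mathfrak{H}(A,A_*)<\epsilon$.

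Next, I would verify that $A_*\in\mathcal{D}$. Because $A_*$ is finite, $\dim_\mathcal{H}(A_*)=0$. On the other hand, every point of $A_*$ is isolated, and the authors already note (cf.\ the $\gamma=0$ step in the proof of Theorem \ref{thm3.1}) that the lower dimension of any set with isolated points is zero, so $\dim_L(A_*)=0$ as well. Hence $\dim_\mathcal{H}(A_*)=\dim_L(A_*)=0$, giving $A_*\in\mathcal{D}$, and since $\epsilon>0$ was arbitrary, $\mathcal{D}$ is dense in $\Pi(\mathbb{R})$.

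There is no serious obstacle here: the observation is simply that the set of finite subsets of $\mathbb{R}$, which is already dense in $\Pi(\mathbb{R})$ with respect to $\mathfrak{H}$, is contained in $\mathcal{D}$ since both the Hausdorff and lower dimensions vanish on it. If one wanted a stronger statement yielding approximants with prescribed common dimension $\gamma\in[0,1]$ rather than merely $\gamma=0$, one could instead place, inside each ball $\mathcal{B}_{\epsilon/2}(a_j)$, a scaled copy of an Ahlfors $\gamma$-regular self-similar Cantor set; by \cite[Theorem 6.4.1]{Fraser} each such piece satisfies $\dim_\mathcal{H}=\dim_L=\gamma$, and the disjoint-union argument from Theorem \ref{thm3.1}, together with finite stability of $\dim_\mathcal{H}$ and positive-separation stability of $\dim_L$, would keep the resulting set in $\mathcal{D}$. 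For the density statement as written, however, the finite-set construction is the cleanest route.
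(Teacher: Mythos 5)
Your proof is correct, and it takes a genuinely different (and more economical) route than the paper. The paper approximates $A$ by a finite union of balls $A'=\bigcup_{j=1}^n \mathcal{B}_{\epsilon/2}(a_j)$ and then argues that $\dim_\mathcal{H}(A')=\dim_L(A')=1$, invoking the open set property and countable stability of $\dim_\mathcal{H}$ together with \cite[Lemma 3.4.9]{Fraser} for $\dim_L$; you instead observe that the finite sets, which are already dense in $\Pi(\mathbb{R})$, lie in $\mathcal{D}$ because both dimensions vanish there. Your version needs fewer tools (no stability lemmas, no open set property) and sidesteps a small blemish in the paper's argument, namely that a union of \emph{open} balls is not an element of $\Pi(\mathbb{R})$ unless one passes to closed balls or closures. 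What the paper's construction buys in exchange is an approximant of full dimension rather than dimension zero, which is in the spirit of the surrounding results; your closing remark about planting Ahlfors $\gamma$-regular pieces in the balls $\mathcal{B}_{\epsilon/2}(a_j)$ recovers exactly that refinement and is essentially the content of the paper's next theorem, so nothing is lost. For the statement as written, your finite-set argument is complete.
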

\begin{proof}
    Since the Hausdorff and lower dimensions of the singleton set $\{a\}$, for any $a \in \mathbb{R}$, are zero, the set $\mathcal{D}$ is non-empty. Let $A \in \Pi(\mathbb{R})$ be a compact set. Then, for any $\epsilon > 0$, there exist points $a_1, a_2, \ldots, a_n \in A$ such that $A = \bigcup_{j=1}^n \mathcal{B}_{\epsilon}(a_j)$,
  where $\mathcal{B}_{\epsilon}(a_j)$ denotes the open ball around $a_j$ of radius $\epsilon$. We may assume that the balls $\mathcal{B}_{\frac{\epsilon}{2}}(a_i)$ and $\mathcal{B}_{\frac{\epsilon}{2}}(a_j)$ are disjoint for $i \neq j$. Define  
\[
A' = \bigcup_{j=1}^n \mathcal{B}_{\tfrac{\epsilon}{2}}(a_j).
\]  
Since the open balls $\mathcal{B}_{\frac{\epsilon}{2}}(a_i)$ and $\mathcal{B}_{\frac{\epsilon}{2}}(a_j)$ are disjoint for $i \neq j$, it follows that  
\[
\inf_{x \in \mathcal{B}_{\tfrac{\epsilon}{2}}(a_i), \, y \in \mathcal{B}_{\tfrac{\epsilon}{2}}(a_j)} d(x, y) > 0 \quad \text{for } i \neq j.
\]  
Then, by \cite[Lemma 3.4.9]{Fraser}, we have $\dim_L(A') = 1$. Also, by the countable stability and the open set property of the Hausdorff dimension, we have $\dim_\mathcal{H} (A') = 1$. In view of the definition of the Hausdorff metric and the construction of the set $A'$, we obtain $\mathfrak{H}(A, A') < \epsilon$, which completes the proof.
\end{proof}

\begin{theorem}
    For each $\gamma \in [0,1].$ The set $\mathcal{D}=\{A\in\Pi(\mathbb{R}):\dim_\mathcal{H}(A) = \dim_L(A)= \gamma\}$ is dense in $\Pi(\mathbb{R}).$ 
\end{theorem}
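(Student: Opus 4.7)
The plan is to combine the constructions from Theorem~\ref{thm3.1} and the immediately preceding theorem, replacing each ``local piece'' by a compact Ahlfors $\gamma$-regular set of the correct dimension so that both the Hausdorff and the lower dimension of the approximant hit the target $\gamma$ simultaneously.

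First I would dispose of the boundary cases. For $\gamma=0$, the approximation by the finite set $A_*=\{a_1,\dots,a_n\}$ used in Theorem~\ref{thm3.1} already gives $\dim_L(A_*)=0$ (isolated points) and $\dim_\mathcal{H}(A_*)=0$ (finite set), so we are done. For $\gamma=1$, the approximation $A'=\bigcup_{j=1}^n \mathcal{B}_{\epsilon/2}(a_j)$ from the preceding theorem already achieves $\dim_L(A')=\dim_\mathcal{H}(A')=1$.

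For the generic case $\gamma\in(0,1)$, I would first fix once and for all a compact Ahlfors $\gamma$-regular model set $K_\gamma\subset\mathbb{R}$, obtained as the attractor of a self-similar IFS with two maps of common contraction ratio $r=2^{-1/\gamma}$ satisfying the strong separation condition (e.g. $x\mapsto rx$ and $x\mapsto rx+(1-r)$). By \cite[Theorem 6.4.1]{Fraser} we have $\dim_L(K_\gamma)=\dim_\mathcal{H}(K_\gamma)=\gamma$. Given $A\in\Pi(\mathbb{R})$ and $\epsilon>0$, I would then carry out exactly the cover step of Theorem~\ref{thm3.1}: pick $a_1<\dots<a_n$ in $A$ with $A\subset\bigcup_{j=1}^n\mathcal{B}_\epsilon(a_j)$ and $\mathcal{B}_{\epsilon/2}(a_i)\cap\mathcal{B}_{\epsilon/2}(a_j)=\emptyset$ for $i\ne j$. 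Scale $K_\gamma$ by a small factor and translate it so that the resulting set $A_j$ lies in $\mathcal{B}_{\epsilon/2}(a_j)$ and contains $a_j$; since scaling and translation are bi-Lipschitz, lower dimension is preserved (and Hausdorff dimension is always preserved by bi-Lipschitz maps), so $\dim_L(A_j)=\dim_\mathcal{H}(A_j)=\gamma$.

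Finally set $A'=\bigcup_{j=1}^n A_j$. The separation of the half-balls yields $\inf_{x\in A_i,\,y\in A_j}d(x,y)>0$ for $i\ne j$, hence by \cite[Lemma 3.4.9]{Fraser} the lower dimension of a finite union of positively separated compact sets equals the minimum of the lower dimensions, giving $\dim_L(A')=\gamma$. By finite stability of the Hausdorff dimension $\dim_\mathcal{H}(A')=\max_j\dim_\mathcal{H}(A_j)=\gamma$. The construction also forces $\mathfrak{H}(A,A')<\epsilon$, since every $a_j\in A_j\subset A'$ and every point of $A'$ lies within $\epsilon/2$ of some $a_j$, which in turn is within $\epsilon$ of $A$. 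The only subtlety worth double-checking is that Lemma~3.4.9 of \cite{Fraser} indeed yields the minimum (not just a lower bound) for finite, positively separated unions; once this is cited properly, the rest is assembly of already-proved facts, so this is where I expect the only real care to be needed.
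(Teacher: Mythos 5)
Your proposal is correct and follows essentially the same route as the paper: the paper also disposes of $\gamma=0$ via the finite set $A_*$ and, for $\gamma\in(0,1]$, invokes the existence of a compact Ahlfors $\gamma$-regular set (citing \cite{Arc}) and then repeats the translate-and-union construction of Theorem~\ref{thm3.1}, so your explicit self-similar model $K_\gamma$ is simply a concrete instance of what the paper leaves abstract. Your worry about \cite[Lemma 3.4.9]{Fraser} is harmless here, since all the pieces $A_j$ share the same lower dimension $\gamma$ and the paper relies on that lemma in exactly the same way in Theorem~\ref{thm3.1}.
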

\begin{proof}
In the case of \( \gamma = 0 \), for any \( \epsilon > 0 \), we construct the set \( A_* \) as in Theorem \ref{thm3.1}. Then \( \dim_L(A_*) = \dim_\mathcal{H}(A_*) = 0 \), and \( \mathfrak{H}(A, A_*) < \epsilon \). This shows that the assertion is true for \( \gamma = 0 \).
Now, let \( \gamma \in (0,1] \). Then there exists an Ahlfors-regular set \( A \) of dimension \( \gamma \) (see \cite{Arc}). Following the same approach as in the proof of Theorem \ref{thm3.1}, the result can be obtained. Hence, we omit the proof.

\end{proof}
\subsection{Applications in dimension-preserving approximation of measures}
This subsection is dedicated for the approximation of a probability measure preserving lower dimension and quantization dimension. Furthermore, it addresses whether, if one probability measure is absolutely continuous with respect to another probability measure, one can establish a relationship between their quantization dimension. We begin by establishing some lemmas necessary for our main results.

\begin{lemma}\label{convothm}
    Let $\mathcal{L}(\mathbb{R})=\big\{\sum_{j=1}^na_j\delta_{x_j}: x_j\in\mathbb{R},a_j>0,\sum_{j=1}^na_j=1,n\in\mathbb{N}\big\}$ and $\vartheta$ be a probability measure with finite support. Then, for a fixed $\omega\in\mathcal{L}(\mathbb{R}),$  $$\dim_{L}(\vartheta*\omega)=\dim_{L}(\vartheta),$$
    where $\vartheta*\omega$ is the convolution of $\vartheta$ and $\omega.$
\end{lemma}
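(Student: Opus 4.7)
The strategy is to notice that the hypothesis makes both $\vartheta$ and $\vartheta\ast\omega$ finitely supported probability measures on $\mathbb{R}$, and then to reduce the assertion to the (essentially one–line) fact that any such measure has lower dimension $0$.

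First, I would write $\vartheta=\sum_{i=1}^{p}p_i\delta_{y_i}$ and $\omega=\sum_{j=1}^{n}a_j\delta_{x_j}$, and compute directly from the definition of convolution that
\[
\vartheta\ast\omega=\sum_{i=1}^{p}\sum_{j=1}^{n}p_i a_j\,\delta_{y_i+x_j},
\]
which is supported on the finite set $\{y_i+x_j:1\le i\le p,\ 1\le j\le n\}\subset\mathbb{R}$. Thus both $\vartheta$ and $\vartheta\ast\omega$ are of the same type (finitely supported probability measures), and it suffices to prove that every such measure has lower dimension zero.

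The main (and only) step is therefore the following claim: if $\mu$ is a probability measure on $\mathbb{R}$ with finite support $S$, then $\dim_L(\mu)=0$. To establish this, fix $x\in S$. If $S=\{x\}$ the conclusion is immediate because $|\mathrm{supp}(\mu)|=0$ leaves the quantifier range for $R$ empty and $\dim_L(\mu)=0$ by the ``otherwise'' clause. Otherwise, set $\rho:=\tfrac{1}{2}\min\{|x-y|:y\in S\setminus\{x\}\}>0$. For every $0<r<\rho$, the ball $\mathcal{B}_r(x)$ meets $S$ only at $x$, so $\mu(\mathcal{B}_r(x))=\mu(\{x\})=:c_x>0$. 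Since $\mu(\mathcal{B}_R(x))\le 1$ for all $R$, one gets the uniform bound
\[
\frac{\mu(\mathcal{B}_R(x))}{\mu(\mathcal{B}_r(x))}\le\frac{1}{c_x},\qquad 0<r<\rho\le R\le|S|.
\]
If some $\beta>0$ admissible in the definition existed with constant $M>0$, then fixing $R$ and letting $r\to 0^+$ would force $M(R/r)^{\beta}\to\infty$, contradicting this bound; whereas $\beta=0$, $M=1$ trivially works since the ratio is at least $1$. Hence $\dim_L(\mu)=0$.

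Applying the claim to $\mu=\vartheta$ and $\mu=\vartheta\ast\omega$ gives $\dim_L(\vartheta)=0=\dim_L(\vartheta\ast\omega)$, completing the proof. There is no real obstacle here: once one unpacks the definition of $\dim_L$ for a measure, the only mild point is to treat separately the degenerate case in which the support reduces to a single point, which is handled by the convention built into the definition.
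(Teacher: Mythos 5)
Your proof hinges on reading ``finite support'' as ``supported on a finite set,'' which collapses both sides of the identity to $0$ and makes the lemma vacuous. That reading cannot be the intended one: the lemma is invoked later, in the proof of Theorem~\ref{densethm112}, for a measure $\lambda_1$ with compact support and $\dim_L(\lambda_1)=\beta>0$, which is impossible for a measure carried by finitely many points. In the paper ``finite support'' is used loosely to mean $|\operatorname{supp}(\vartheta)|<\infty$, i.e.\ bounded (compact) support; this is visible in the paper's own proof, which never uses atomicity of $\vartheta$ and only needs the quantifier range $0<r<R<|\operatorname{supp}(\vartheta)|$ to be nonempty. Your computation that a finitely atomic measure has lower dimension $0$ is correct as far as it goes (the ratio $\mu(\mathcal{B}_R(x))/\mu(\mathcal{B}_r(x))$ stays bounded by $1/\mu(\{x\})$ while $M(R/r)^\beta$ blows up as $r\to0^+$), but it proves only a degenerate special case and misses the entire content of the lemma.

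The real work is to show that convolving a general compactly supported $\vartheta$ with $\omega=\sum_{j}a_j\delta_{x_j}$ preserves the defining inequality. The paper does this by writing $(\vartheta*\omega)(\mathcal{B}_R(x))=\sum_{j}a_j\,\vartheta(\mathcal{B}_R(x-x_j))$, applying the hypothesis $\vartheta(\mathcal{B}_R(y))\ge M(R/r)^{s}\,\vartheta(\mathcal{B}_r(y))$ at the translated points $y=x-x_j\in\operatorname{supp}(\vartheta)$, and then using the mediant-type bound $\sum_j a_j u_j\big/\sum_j a_j v_j\ \ge\ \min_j (u_j/v_j)$ to transfer the exponent $s$ from $A_\vartheta$ to $A_{\vartheta*\omega}$; a symmetric argument handles exponents outside $A_\vartheta$. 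None of this appears in your proposal, so as written it does not establish the lemma in the form the paper actually needs and uses.
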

\begin{proof}
  Let $\vartheta$ be a probability measure with finite support. We define  sets 
   \begin{align*}
     A_{\vartheta} = \Bigg\{&\beta\ge0: \exists~ M>0 \text{ such that}, \forall~ 0<r<R<|\text{supp} (\vartheta)|\text{ and }x\in\text{supp}(\vartheta), \\&\hspace{7cm}\frac{\vartheta(\mathcal{B}_R(x))}{\vartheta(\mathcal{B}_r(x))}\ge M\Big(\frac{R}{r}\Big)^\beta\Bigg\},
 \end{align*}
  \begin{align*}
     A_{\vartheta*\omega} = \Bigg\{&\gamma\ge0:\exists~ M>0 \text{ such that}, \forall~ 0<r<R<|\text{supp} (\vartheta * \omega)|\text{ and } \\&\hspace{3.6cm}x\in\text{supp}(\vartheta *\omega), \frac{\vartheta*\omega(\mathcal{B}_R(x))}{\vartheta*\omega(\mathcal{B}_r(x))}\ge M\Big(\frac{R}{r}\Big)^\gamma\Bigg\}.
 \end{align*}
  Then, by the definition of the lower dimension for any $s \in A_{\vartheta}$, there exists a constant $M$ for any $x \in \text{supp}(\vartheta)$ and $0< r<R< |\text{supp}(\vartheta)|,$ we have
  \[
  \frac{\vartheta(\mathcal{B}_R(x))}{\vartheta(\mathcal{B}_r(x))} \ge M\bigg(\frac{R}{r}\bigg)^s.
  \] 
  Note that, for the Dirac measure $\delta_{y}$ supported on $y \in \mathbb{R}$ the support of the convolution $\text{supp}(\vartheta*\delta_{y})= \text{supp}(\vartheta) +\{y\}$, for each $x \in \text{supp}(\vartheta*\delta_{y})$, we have $z=x-y \in \text{supp}(\vartheta)$.
  Now, let $\omega = \sum_{j=1}^n a_j \delta_{x_j}$ be a probability measure in $\mathcal{L}(\mathbb{R})$; then for any $x \in \text{supp}(\vartheta*\omega)$ and $0<r < R< |\text{supp}(\vartheta*\omega)|,$ we have 
\begin{equation*}
    \begin{aligned}
      \frac{(\vartheta * \omega)(\mathcal{B}_R(x))}{(\vartheta * \omega)(\mathcal{B}_r(x))} &= \frac{(\vartheta * \sum_{j=1}^n a_j \delta_{x_j}) (\mathcal{B}_R(x))}{(\vartheta * \sum_{j=1}^n a_j \delta_{x_j}) (\mathcal{B}_r(x))} = \frac{\sum_{j=1}^n a_j \vartheta( \mathcal{B}_R(x)-x_j)}{\sum_{j=1}^n a_j \vartheta( \mathcal{B}_r(x)-x_j)} \\& = \frac{\sum_{j=1}^n a_j \vartheta( \mathcal{B}_R(x-x_j))}{\sum_{j=1}^n a_j \vartheta( \mathcal{B}_r(x- x_j,))}   \ge \frac{\sum_{j=1}^n a_j M \bigg(\frac{R}{r}\bigg)^s \vartheta(\mathcal{B}_r(x-x_j))}{\sum_{j=1}^n a_j \vartheta( \mathcal{B}_r(x- x_j))} \\& \ge M \bigg(\frac{R}{r}\bigg)^s.
    \end{aligned}
\end{equation*}
In the above expression, the fourth inequality arrived due to the fact that $ x-x_j \in \text{supp}(\vartheta).$ Thus, it implies that $s \in A_{\vartheta * \omega}.$
Again, let $s \in A_{\vartheta}^C$ be any real number. Then, for some $x \in \text{supp}(\vartheta)$ and $0< r< R< |\text{supp}(\vartheta)|$, we have 
$$  \frac{\vartheta(\mathcal{B}_R(x))}{\vartheta(\mathcal{B}_r(x))} < M\bigg(\frac{R}{r}\bigg)^s.$$
Notice that for $x \in \text{supp}(\vartheta),~ x +x_j \in \text{supp}(\vartheta * \omega).$ Consider 
\begin{equation*}
   \begin{aligned}
        \frac{(\vartheta * \omega)(\mathcal{B}_R(x+x_j))}{(\vartheta * \omega)(\mathcal{B}_r(x+x_j))} = \frac{\sum_{j=1}^n a_j \vartheta(\mathcal{B}_R(x))}{\sum_{j=1}^n a_j \vartheta( \mathcal{B}_r(x))} &< \frac{\sum_{j=1}^n a_j M \bigg(\frac{R}{r}\bigg)^s \vartheta(\mathcal{B}_r(x))}{\sum_{j=1}^n a_j \vartheta( \mathcal{B}_r(x))} \\& < M \bigg(\frac{R}{r}\bigg)^s.
   \end{aligned} 
\end{equation*}
This implies that $s \in A_{\vartheta *\omega}^C,$ concluding the assertion.
\end{proof}

\begin{lemma}\label{1179}
    Let $\vartheta$ be a Borel probability measure on $\mathbb{R}^m$ and $\beta$ be a positive real number. Then $\dim_{L}(\vartheta)=\dim_{L}(\theta),$ where $\theta(E)=\vartheta(\beta E),$ for all Borel set $E.$  
\end{lemma}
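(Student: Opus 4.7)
The plan is to recognize that $\theta$ is nothing but the pushforward of $\vartheta$ under the scaling map $S_\beta : \mathbb{R}^m \to \mathbb{R}^m$, $S_\beta(y) = y/\beta$, because $S_\beta^{-1}(E) = \beta E$ and hence $\theta(E) = \vartheta(\beta E) = \vartheta(S_\beta^{-1}(E))$. Since the lower dimension of a measure is defined through the ratio $\vartheta(\mathcal{B}_R(x))/\vartheta(\mathcal{B}_r(x))$ against $(R/r)^\beta$, and both sides of this inequality are invariant under a uniform rescaling of $r$ and $R$, the identity should drop out of a clean change of variables.

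First, I would record the elementary observations: $\text{supp}(\theta) = \tfrac{1}{\beta}\,\text{supp}(\vartheta)$, so $|\text{supp}(\theta)| = \tfrac{1}{\beta}|\text{supp}(\vartheta)|$; moreover for every $x \in \text{supp}(\theta)$ one has $\beta x \in \text{supp}(\vartheta)$, and $\beta\mathcal{B}_R(x) = \mathcal{B}_{\beta R}(\beta x)$. Consequently
\begin{equation*}
\frac{\theta(\mathcal{B}_R(x))}{\theta(\mathcal{B}_r(x))} \;=\; \frac{\vartheta(\mathcal{B}_{\beta R}(\beta x))}{\vartheta(\mathcal{B}_{\beta r}(\beta x))}
\end{equation*}
for all $0 < r < R \le |\text{supp}(\theta)|$ and all $x \in \text{supp}(\theta)$.

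Next, I would introduce the auxiliary sets $A_\vartheta$ and $A_\theta$ exactly as in the proof of Lemma~\ref{convothm}, and show the set equality $A_\vartheta = A_\theta$. For the inclusion $A_\vartheta \subseteq A_\theta$, pick $s \in A_\vartheta$ with witnessing constant $M$; given $0 < r < R \le |\text{supp}(\theta)|$ and $x \in \text{supp}(\theta)$, set $r' = \beta r$, $R' = \beta R$, $y = \beta x$. Then $0 < r' < R' \le |\text{supp}(\vartheta)|$ and $y \in \text{supp}(\vartheta)$, so the displayed identity together with the hypothesis on $\vartheta$ yields
\begin{equation*}
\frac{\theta(\mathcal{B}_R(x))}{\theta(\mathcal{B}_r(x))} \;\ge\; M\Bigl(\frac{R'}{r'}\Bigr)^s \;=\; M\Bigl(\frac{R}{r}\Bigr)^s,
\end{equation*}
which gives $s \in A_\theta$. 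The reverse inclusion is completely symmetric: starting from $s \in A_\theta$, apply the same substitution with roles of $\vartheta$ and $\theta$ swapped (equivalently, use the scaling by $\beta$ instead of $1/\beta$). Taking suprema then gives $\dim_L(\vartheta) = \dim_L(\theta)$.

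I do not expect a real obstacle here; the statement is essentially the scale-invariance of the definition of lower dimension, and the only care needed is to verify that the constraint $0 < r < R \le |\text{supp}(\cdot)|$ on the radii matches up correctly after the change of variables, which it does thanks to $|\text{supp}(\theta)| = |\text{supp}(\vartheta)|/\beta$.
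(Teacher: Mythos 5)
Your proposal is correct and follows essentially the same route as the paper: both arguments rest on the identity $\theta(\mathcal{B}_R(x))=\vartheta(\mathcal{B}_{\beta R}(\beta x))$, the matching of the radius range via $|\te{supp}(\theta)|=\tfrac{1}{\beta}|\te{supp}(\vartheta)|$, and the comparison of the sets $A_\vartheta$ and $A_\theta$. The only cosmetic difference is that the paper handles the reverse direction by showing $s\in A_\vartheta^{C}$ implies $s\in A_\theta^{C}$, whereas you invoke the symmetric inclusion $A_\theta\ci A_\vartheta$ directly; these are interchangeable.
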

\begin{proof}
    Let $A_{\vartheta}$ and $A_{\theta}$ be the sets defined similarly as in Lemma \ref{convothm} with respect to the measures $\vartheta$ and $\theta$, respectively. Observe that support of $\theta$ is $\frac{1}{\beta} \text{supp}(\vartheta),$ and $|\text{supp}(\theta)|= |\frac{1}{\beta} \text{supp}(\vartheta)|.$ For any $s \in A_{\vartheta}, ~~x \in \text{supp}(\theta),$ and $0< r < R < |\text{supp}(\theta)|,$ we have
\[
\frac{\theta(\mathcal{B}_R(x))}{\theta(\mathcal{B}_r(x))} = \frac{\theta(\mathcal{B}_R(\frac{y}{\beta}))}{\theta(\mathcal{B}_r(\frac{y}{\beta}))},
\]
for some $y \in \text{supp}(\vartheta).$ Also, $\beta \mathcal{B}_r(x)= \mathcal{B}_{\beta r}(\beta x),$ and $0< \beta r < \beta R <|\text{supp}(\vartheta)|$ whenever $0 < r < R< |\text{supp}(\theta)|.$ Now, since $\theta(D)= \vartheta(\beta D)$ for any Borel set $D$, we obtain
\begin{equation*}
    \begin{aligned}
     \hspace{1.9cm} \frac{\theta(\mathcal{B}_R(\frac{y}{\beta}))}{\theta(\mathcal{B}_r(\frac{y}{\beta}))} = \frac{\vartheta(\beta \mathcal{B}_R(\frac{y}{\beta}))}{\vartheta(\beta \mathcal{B}_r(\frac{y}{\beta}))} = \frac{\vartheta(\mathcal{B}_{\beta R}(y))}{\vartheta(\mathcal{B}_{\beta r}(y))}  \ge M \bigg(\frac{R}{r}\bigg)^s.  
    \end{aligned}
\end{equation*}
This implies that $s \in A_{\theta}.$
Similarly for any $s \in A_{\vartheta}^C,$ we obtained
$\frac{\theta(\mathcal{B}_R(x))}{\theta(\mathcal{B}_r(x))} = \frac{\vartheta(\mathcal{B}_{\beta R}(y))}{\vartheta(\mathcal{B}_{\beta r}(y))} < M \bigg(\frac{R}{r}\bigg)^s,$ for some $x \in \text{supp}(\vartheta)$ and $0 < r < R < |\text{supp}(\vartheta)|.$ this implies that $s \in A_{\theta}^C.$
Thus, we get the required result.
\end{proof}
 \begin{lemma}\label{lipdim}
Let $\vartheta, \theta$ be finite Borel measures. Then we have that 
\[
D_r(\vartheta+\theta) = \max\{D_r(\theta),D_r(\vartheta)\}.
\]
 \end{lemma}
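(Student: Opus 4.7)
The plan is to establish matching inequalities, working directly from the definition of the quantization error and invoking the standing assumption that all limits $D_r$ exist. Two elementary inequalities on the $n$-th quantization errors will do the job:
\begin{equation*}
V_{n_1+n_2,r}(\vartheta+\theta)\le V_{n_1,r}(\vartheta)+V_{n_2,r}(\theta), \qquad V_{n,r}(\vartheta+\theta)\ge V_{n,r}(\vartheta)+V_{n,r}(\theta).
\end{equation*}
The first follows by fixing near-optimal codebooks $A_{\vartheta}$ and $A_{\theta}$ of sizes $n_1$ and $n_2$ for $\vartheta$ and $\theta$, respectively, taking $A=A_{\vartheta}\cup A_{\theta}$ (which has at most $n_1+n_2$ points), and using $d(x,A)^r\le d(x,A_{\vartheta})^r$ and $d(x,A)^r\le d(x,A_{\theta})^r$ inside the respective integrals. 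The second is trivial: for every $A$ with $\mathrm{Card}(A)\le n$ the integral splits, so each part is bounded below by $V_{n,r}$ of the individual measure.

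For the upper bound on $D_r(\vartheta+\theta)$, set $D:=\max\{D_r(\vartheta),D_r(\theta)\}$. Fix $\epsilon>0$. Since $D_r(\mu)=\lim_n r\log n/(-\log V_{n,r}(\mu))$ for $\mu\in\{\vartheta,\theta\}$, for $n$ large enough both $V_{n,r}(\vartheta)$ and $V_{n,r}(\theta)$ are at most $n^{-r/(D+\epsilon)}$. Applying the first inequality with $n_1=n_2=\lfloor n/2\rfloor$ gives
\begin{equation*}
V_{n,r}(\vartheta+\theta)\ \le\ 2\,(\lfloor n/2\rfloor)^{-r/(D+\epsilon)},
\end{equation*}
whence $-\log V_{n,r}(\vartheta+\theta)\ge \frac{r\log(n/2)}{D+\epsilon}-\log 2$. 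Dividing by $r\log n$ and letting $n\to\infty$ yields $D_r(\vartheta+\theta)\le D+\epsilon$, and as $\epsilon$ is arbitrary we obtain $D_r(\vartheta+\theta)\le\max\{D_r(\vartheta),D_r(\theta)\}$.

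For the lower bound, the second inequality gives $V_{n,r}(\vartheta+\theta)\ge\max\{V_{n,r}(\vartheta),V_{n,r}(\theta)\}$, so
\begin{equation*}
\frac{r\log n}{-\log V_{n,r}(\vartheta+\theta)}\ \ge\ \max\!\left\{\frac{r\log n}{-\log V_{n,r}(\vartheta)},\ \frac{r\log n}{-\log V_{n,r}(\theta)}\right\}.
\end{equation*}
Passing to the limit (which exists by assumption for each of $\vartheta$ and $\theta$, hence the maximum converges) gives $D_r(\vartheta+\theta)\ge\max\{D_r(\vartheta),D_r(\theta)\}$. Combining the two bounds proves the lemma.

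The only mildly delicate point is the upper bound, where one must choose the split $n_1+n_2=n$ compatibly and handle the additive constant $\log 2$ carefully so that it is absorbed in the limit; this is a routine verification and the main argument is otherwise entirely formal. No new ideas are needed beyond the two inequalities above.
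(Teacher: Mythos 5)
Your proof is correct and follows essentially the same route as the paper: the lower bound comes from the superadditivity $V_{n,r}(\vartheta+\theta)\ge V_{n,r}(\vartheta)+V_{n,r}(\theta)$, and the upper bound from taking the union of near-optimal codebooks. The only difference is presentational --- you bound $V_{n,r}$ directly by $n^{-r/(D+\epsilon)}$ from the definition, while the paper phrases the same estimate via the vanishing of the quantization coefficient $n^{r/s}V_{n,r}$ for $s$ above the dimension (Graf--Luschgy, Proposition 11.3).
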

 \begin{proof}
 We have
 \begin{equation*}
 \begin{aligned}
 V_{n,r}(\vartheta+\theta)= & \inf \Big\{\int d(x, A)^r d(\vartheta+\theta)(x): A \subset \mathbb{R}^m,~ \text{Card}(A) \le n\Big\}\\ \ge &\inf \Big\{\int d(x, A)^r d\vartheta(x): A \subset \mathbb{R}^m, ~ \text{Card}(A) \le n\Big\} \\ & + \inf \Big\{\int d(x, A)^r d\theta(x): A \subset \mathbb{R}^m, ~ \text{Card}(A) \le n\Big\} \\  = & V_{n,r}(\vartheta)+V_{n,r}(\theta).
 \end{aligned}
 \end{equation*}
 By the definition of quantization dimension, it follows that $$D_r(\vartheta+\theta) \ge \max\{D_r(\vartheta),D_r(\theta)\}.$$
 Now, let $s>  \max\{D_r(\vartheta),D_r(\theta)\}.$ In view of \cite[Propostion $11.3$]{GL1}, we have $$ \lim_{n \to \infty} n^{\frac{r}{s}} V_{n,r}(\vartheta)=0 ~~~\text{and}~~~~  \lim_{n \to \infty} n^{\frac{r}{s}} V_{n,r}(\theta)=0.$$
 Let $\ep > 0.$ Then there exist natural numbers $N_1(\vartheta,\ep)$ and $N_2(\theta,\ep)$ such that $$ n^{\frac{r}{s}} V_{n,r}(\vartheta) < \ep ~~~ \forall~~ n \ge N_1, ~\text{and}~~ n^{\frac{r}{s}} V_{n,r}(\theta) < \ep ~~~ \forall~~ n \ge N_2.$$
 Choose $N_0=\max\{N_1,N_2\},$ we immediately have 
 $$ n^{\frac{r}{s}} V_{n,r}(\vartheta) < \ep  ~\text{and}~~ n^{\frac{r}{s}}  V_{n,r}(\theta) < \ep ~~~ \forall~~ n \ge N_0.$$ 
 Further, using the definition of $n$-th quantization error of order $r$ of a measure, there exist sets $A(n,\vartheta,\ep)$ and $B(n,\theta,\ep)$ satisfying the following:
 $$ n^{\frac{r}{s}}  \int d\Big(x, A(n,\vartheta,\ep)\Big)^r d\vartheta(x) < \ep ~~\text{ and }~~   n^{\frac{r}{s}} \int d\Big(x, B(n,\theta,\ep)\Big)^r d\theta(x) < \ep ~~ \forall ~~n \ge N_0 .$$
 Since $\text{Card}\Big(A(n,\vartheta,\ep)\Big) \le n $ and $\text{Card}\Big(B(n,\theta,\ep)\Big) \le n $, it follows that 
 \begin{equation*}
 \begin{aligned}
\hspace{1cm}&\text{Card}\Big(A(n,\vartheta,\ep) \cup B(n,\theta,\ep)\Big) \le 2n,\quad\text{and} \\& 
(2n)^{\frac{r}{s}} \int d\Big(x, A(n,\vartheta,\ep) \cup B(n,\theta,\ep)\Big)^r d\vartheta(x) < 2^{\frac{r}{s}} \ep,
 \end{aligned}
 \end{equation*}
 for every $n \ge N_0.$ 
 Using all the above estimates, we have
 \begin{equation*}
  \begin{aligned}
  (2n)^{\frac{r}{s}}& V_{2n,r}(\vartheta+\theta)\\= & (2n)^{\frac{r}{s}}~ \inf \Big\{\int d(x, A)^r d(\vartheta+\theta)(x): A \subset \mathbb{R}^m,~ \text{Card}(A) \le 2n\Big\}\\  = & (2n)^{\frac{r}{s}} ~\inf \Big\{\int d(x, A)^r d\vartheta(x)+\int d(x, A)^r d\theta(x): A \subset \mathbb{R}^m,~ \text{Card}(A) \le 2n\Big\}\\  \le  & (2n)^{\frac{r}{s}} \int d\Big(x, A(n,\vartheta,\ep) \cup B(n,\theta,\ep)\Big)^r d\vartheta(x)+ \\& \hspace{1cm}(2n)^{\frac{r}{s}}\int d\Big(x, A(n,\vartheta,\ep) \cup B(n,\theta,\ep)\Big)^r d\theta(x)\\ < & ~2^{\frac{r}{s}+1} \ep.
  \end{aligned}
  \end{equation*}
  Since $\lim_{n \to \infty} (2n)^{\frac{r}{s}} V_{2n,r}(\vartheta+\theta)=0,$ we have $D_r(\vartheta+\theta) < s.$ Further, since $s > \max\{D_r(\vartheta),D_r(\theta)\}$ is arbitrary, it follows that $D_r(\vartheta+\theta) \le \max\{D_r(\vartheta),D_r(\theta)\}.$ This completes the proof of the lemma.
 \end{proof}
 \begin{remark}
 The result of the above lemma can be compared to those that appeared in \cite{Mat3} in terms of Hausdorff dimension. For example, let us take $\lambda= \vartheta + \theta,$ where $\vartheta=\mathcal{L}^1|_{[0,1]}$ and $ \theta \in \mathcal{L}(\mathbb{R}).$ Then it can be deduced that $\dim_\mathcal{H}(\lambda)=\min\{\dim_\mathcal{H}(\vartheta),\dim_\mathcal{H}(\theta)\}=0,$ however, $D_r(\lambda)=\max\{D_r(\vartheta), D_r(\theta)\}=1.$ 
 \end{remark}
\begin{lemma}\label{lipdim1}
 Let $\vartheta$ be a finite Borel measure. For a fixed $\theta \in \mathcal{L}(\mathbb{R}^m)$, we have that 
 \[
 D_r(\vartheta*\theta) =D_r(\vartheta).
 \]
  \end{lemma}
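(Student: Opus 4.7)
The plan is to reduce the statement to the already-established Lemma~\ref{lipdim} by decomposing $\vartheta*\theta$ into a finite weighted sum of translates of $\vartheta$, and then invoking two elementary invariances of the quantization dimension: translation invariance and positive-scaling invariance.

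First, write $\theta=\sum_{j=1}^{k}a_j\delta_{x_j}\in\mathcal{L}(\mathbb{R}^m)$ with $a_j>0$ and $\sum_j a_j=1$. By the definition of convolution,
\[
\vartheta*\theta \;=\; \sum_{j=1}^{k} a_j\,(\vartheta*\delta_{x_j}) \;=\; \sum_{j=1}^{k} a_j\,(\vartheta + x_j),
\]
since convolving with $\delta_{x_j}$ is precisely translation by $x_j$. So the first step is to record this identity.

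Second, I would verify the two invariances. For translation invariance, note that for any finite codebook $A\subset\mathbb{R}^m$ with $\text{Card}(A)\le n$, the change of variable $x\mapsto x+y$ gives
\[
\int d(x,A)^{r}\,d(\vartheta+y)(x) \;=\; \int d(x-y,A-y)^{r}\,d\vartheta(x),
\]
and since $\text{Card}(A-y)\le n$, taking the infimum yields $V_{n,r}(\vartheta+y)=V_{n,r}(\vartheta)$, hence $D_r(\vartheta+y)=D_r(\vartheta)$. For positive-scaling invariance, $V_{n,r}(c\vartheta)=c\,V_{n,r}(\vartheta)$ for $c>0$ directly from the definition, and then
\[
D_r(c\vartheta)=\lim_{n\to\infty}\frac{r\log n}{-\log(c\,V_{n,r}(\vartheta))}=\lim_{n\to\infty}\frac{r\log n}{-\log c-\log V_{n,r}(\vartheta)}=D_r(\vartheta),
\]
since the additive constant $-\log c$ is negligible against $-\log V_{n,r}(\vartheta)\to\infty$. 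Combining, $D_r\bigl(a_j(\vartheta+x_j)\bigr)=D_r(\vartheta)$ for each $j$.

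Third, I would extend Lemma~\ref{lipdim} inductively from two summands to $k$ summands: grouping $\mu_{k-1}':=\sum_{j=1}^{k-1}a_j(\vartheta+x_j)$ and applying the two-term lemma to $\mu_{k-1}'+a_k(\vartheta+x_k)$ gives, after $k-1$ iterations, that
\[
D_r(\vartheta*\theta)\;=\;\max_{1\le j\le k}\,D_r\bigl(a_j(\vartheta+x_j)\bigr)\;=\;D_r(\vartheta),
\]
which is the desired equality. The only mildly delicate point is that Lemma~\ref{lipdim} is stated for two finite Borel measures, so at each inductive step one must note that partial sums remain finite Borel measures; this is immediate since each $a_j(\vartheta+x_j)$ is finite Borel. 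Otherwise the argument is routine, and no serious obstacle is anticipated.
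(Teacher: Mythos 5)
Your proposal is correct and follows essentially the same route as the paper: decompose $\vartheta*\theta=\sum_j a_j(\vartheta*\delta_{x_j})$, apply Lemma~\ref{lipdim} to reduce to a maximum, and show $V_{n,r}(\vartheta*\delta_{x_j})=V_{n,r}(\vartheta)$ by translating the codebook. You are in fact slightly more careful than the paper, which silently uses both the scaling invariance $D_r(a_j\mu)=D_r(\mu)$ and the extension of the two-term Lemma~\ref{lipdim} to $k$ summands that you spell out explicitly.
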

  \begin{proof}
   Let $\theta \in \mathcal{L}(\mathbb{R}^m).$ By definition of $\mathcal{L}(\mathbb{R}^m),$ we write $\theta= \sum_{i=1}^k a_i \delta_{x_i}$ for some $x_i \in \mathbb{R}^m$ and $a_i \ge 0$ such that $\sum_{i=1}^k a_i=1.$ Now, note that
    \[
    \vartheta*\theta=\vartheta *\Big(\sum_{i=1}^k a_i \delta_{x_i}\Big)= \sum_{i=1}^k a_i \vartheta*\delta_{x_i}.
    \]
    Using Lemma \ref{lipdim}, we have 
    \[
    D_r(\vartheta*\theta)=D_r\Big(\sum_{i=1}^k a_i \vartheta*\delta_{x_i}\Big)= \max_{1\le i\le k}\{D_r(a_i \vartheta*\delta_{x_i})\}=\max_{1\le i\le k}\{D_r(\vartheta*\delta_{x_i})\}.
    \]
    From the above, it is enough to prove that $D_r(\vartheta*\delta_{x_i})= D_r(\vartheta).$ We proceed as follows.
    \begin{equation*}
    \begin{aligned}
    V_{n,r}(\vartheta*\delta_{x_i}) & =  \inf \Big\{\int d(x, A)^r d(\vartheta*\delta_{x_i})(x): A \subset \mathbb{R}^m, \, \text{Card}(A) \le n\Big\}\\
    & =  \inf \Big\{\int d(x+x_i, A)^r d\vartheta(x): A \subset \mathbb{R}^m, \, \text{Card}(A) \le n\Big\}\\& =  \inf \Big\{\int d(x+x_i, B+x_i)^r d\vartheta(x): B=A-x_i \subset \mathbb{R}^m, \, \text{Card}(A) \le n\Big\}\\
    & =  \inf \Big\{\int d(x, B)^r d\vartheta(x): B \subset \mathbb{R}^m, \, \text{Card}(B) \le n\Big\}\\ & = V_{n,r}(\vartheta).
    \end{aligned}
    \end{equation*}
     This completes the proof of the lemma.
  \end{proof}
  \begin{remark}
    A related result can be found in Zhu's thesis \cite{Zhu2}, where he considered measures \(\vartheta\) and \(\theta\) satisfying the \(r + \delta\)-moment condition and established a relation between their upper quantization dimensions. Specifically, he proved that
$\overline{\dim}_r(\vartheta * \theta) \le \overline{\dim}_r(\vartheta) + \overline{\dim}_r(\theta).$ In contrast, our result shows that the quantization dimension of a measure \(\vartheta\) remains invariant under convolution with a finite combination of Dirac measures. Moreover, the approach we employ differs significantly from that used by Zhu.
  \end{remark}
  \begin{note}
      It is worth noting that for any compactly supported Borel probability measure $\vartheta,$ the $pth$ moment is always finite for all $p>0.$ Let $\vartheta$ be a Borel probability measure with compact support, then for any $x \in \text{supp}(\vartheta)$ there exists some $K >0$ such that $\|x\| \le K,$ and hence $\int \|x\|^p d\vartheta(x) \le K^p \int d \vartheta(x)= K^p < \infty.$  The finite $pth$ moment guaranteed the existence of lower and upper quantization dimension, but without this assumption, the quantization dimension can cross the dimension of the underlying space. In this way, our assumption is more general, and by following the same technique, one can obtain similar results for lower and upper quantization dimensions, respectively.
  \end{note}
  It is well-known \cite{Bill} that $\mathcal{L}(\mathbb{R}^m)$ is a dense subset of $\Omega(\mathbb{R}^m),$ where the latter is endowed with the metric $d_L.$ We use this fact to prove the following theorem.
\begin{theorem}\label{densethm}
Let $ \alpha \in [0,m]$. Then, the set  $\mathcal{S}_{\alpha}:=\{ \vartheta \in  \Omega(\mathbb{R}^m): D_r(\vartheta)  = \alpha\}$ is a dense subset of $\Omega(\mathbb{R}^m).$
\end{theorem}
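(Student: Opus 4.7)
The plan is to combine the density of $\mathcal{L}(\mathbb{R}^m)$ in $(\Omega(\mathbb{R}^m), d_L)$ with the convolution-invariance of the quantization dimension established in Lemma \ref{lipdim1}. Given $\vartheta \in \Omega(\mathbb{R}^m)$ and $\epsilon>0$, I would first pick $\omega = \sum_{i=1}^k a_i \delta_{x_i} \in \mathcal{L}(\mathbb{R}^m)$ with $d_L(\vartheta,\omega) < \epsilon/2$. The measure $\omega$ alone has $D_r(\omega) = 0$, so it only settles the case $\alpha = 0$. For $\alpha \in (0,m]$ I would manufacture a second ingredient $\nu_\alpha \in \Omega(\mathbb{R}^m)$ with $D_r(\nu_\alpha) = \alpha$ and with support contained in a ball of radius $\epsilon/2$ around the origin, and then set $\mu := \nu_\alpha * \omega$. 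Lemma \ref{lipdim1} applied to the finite Borel measure $\nu_\alpha$ and $\omega \in \mathcal{L}(\mathbb{R}^m)$ immediately yields $D_r(\mu) = D_r(\nu_\alpha) = \alpha$.

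The construction of $\nu_\alpha$ is the technical heart of the argument. For $\alpha \in (0,m]$, I would take a self-similar IFS on $\mathbb{R}^m$ (for instance, a Cantor-type IFS along one axis combined with, if needed, a product with Lebesgue-type pieces, or simply a suitably chosen self-similar IFS in $\mathbb{R}$ embedded in $\mathbb{R}^m$) satisfying the open set condition and whose invariant self-similar measure has Hausdorff dimension $\alpha$. By the Graf--Luschgy formula (see \cite[Theorem 5.11]{GL3}), this invariant measure has quantization dimension equal to a prescribed value in $(0,m]$, which can be tuned to $\alpha$ by choosing the contraction ratios and the probability weights appropriately. Pre-composing the IFS with a small homothety, I can force the attractor (and hence $\mathrm{supp}(\nu_\alpha)$) to lie in $\mathcal{B}_{\epsilon/2}(0)$; Lemma \ref{1179} guarantees that this scaling preserves the relevant dimensional data (and the same invariance for $D_r$ is a routine scaling check from the definition of $V_{n,r}$). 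For $\alpha = 0$ I would just take $\mu := \omega$, whose finite support gives $D_r(\mu)=0$.

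It remains to check that $d_L(\vartheta, \mu) < \epsilon$. By the triangle inequality it suffices to show $d_L(\omega, \nu_\alpha * \omega) \le \epsilon/2$. For any $h \in \mathrm{Lip}_1(\mathbb{R}^m)$,
\begin{equation*}
\Bigl|\int h\, d(\nu_\alpha * \omega) - \int h\, d\omega\Bigr|
= \Bigl|\iint \bigl(h(x+y)-h(y)\bigr)\, d\nu_\alpha(x)\, d\omega(y)\Bigr|
\le \int \|x\|_2\, d\nu_\alpha(x),
\end{equation*}
and the right-hand side is at most $\epsilon/2$ because $\mathrm{supp}(\nu_\alpha) \subset \mathcal{B}_{\epsilon/2}(0)$. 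Taking the supremum over $h$ gives the required bound, and combining with $d_L(\vartheta,\omega) < \epsilon/2$ finishes the proof.

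The main obstacle I anticipate is the clean production of the auxiliary measure $\nu_\alpha$ with $D_r(\nu_\alpha)$ exactly equal to the prescribed value $\alpha$ for every $\alpha \in (0,m]$; one has to invoke (or quote) a parametric family of self-similar IFSs whose Graf--Luschgy quantization exponent continuously sweeps out the whole interval $(0,m]$, and then verify that rescaling to a small ball does not alter $D_r$. Everything else reduces to the density of $\mathcal{L}(\mathbb{R}^m)$, the convolution invariance in Lemma \ref{lipdim1}, and the one-line Lipschitz estimate above.
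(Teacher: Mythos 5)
Your proposal is correct and follows essentially the same route as the paper: approximate $\vartheta$ by some $\omega\in\mathcal{L}(\mathbb{R}^m)$, convolve it with a rescaled measure of quantization dimension $\alpha$ supported in a small ball, and invoke Lemma \ref{lipdim1} together with the one-line Lipschitz estimate for $d_L$. The only difference is that you explicitly construct the auxiliary measure $\nu_\alpha$ via the Graf--Luschgy formula (and your tuning of $N$ and $c$ does sweep out all of $(0,m]$), whereas the paper simply posits a compactly supported $\lambda\in\mathcal{S}_\alpha$; your version is, if anything, more complete on that point.
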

\begin{proof}
    Let $ \vartheta \in \Omega(\mathbb{R}^m)$ and $\epsilon>0.$ Using the density of $\mathcal{L}(\mathbb{R}^m)$ in $\Omega(\mathbb{R}^m)$, there exists $\theta$ in $\mathcal{L}(\mathbb{R}^m)$ such that $$ d_L(\vartheta, \theta) < \frac{\epsilon}{2}.$$ Further, we consider a non-zero compactly supported measure $ \lambda \in \mathcal{S}_{\alpha}$ with its support $K \subset \mathbb{R}^m.$ Without loss of generality, we assume $K \subseteq [0,c]^m$ for some $c>0.$ Let $\lambda'= \theta * \lambda_1,$ where $\lambda_1(A):=\lambda(\frac{2|K| A}{\epsilon})$ and $|K|:=\sup\{\|x-y\|_2: x,y \in K\}.$ Note that Lemma \ref{lipdim1} implies that $D_r(\lambda')=D_r(\lambda)=\alpha,$ that is, $\lambda' \in \mathcal{S}_{\alpha}.$ On similar lines of \cite[Theorem 3.19]{VAM}, we get
      $$ d_L(\vartheta,\lambda') \le  d_L(\vartheta,\theta) + d_L(\theta,\lambda') < \epsilon .$$ 
             Thus, the proof of the theorem is complete.
\end{proof}

\begin{theorem}\label{densethm112}
    For each $\beta \in [0,m]$, the set $\mathcal{S}_\beta:=\{\vartheta\in\Omega(\mathbb{R}^m): \dim_{L}(\vartheta)= \beta\}$ is dense in $\Omega(\mathbb{R}^m)$.
\end{theorem}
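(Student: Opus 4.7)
The plan is to proceed in close analogy with Theorem~\ref{densethm}, replacing Lemma~\ref{lipdim1} (which preserved quantization dimension under convolution with an element of $\mathcal{L}(\mathbb{R}^m)$) by Lemma~\ref{convothm} (which plays the analogous role for $\dim_L$). Fix an arbitrary $\vartheta\in\Omega(\mathbb{R}^m)$ and $\epsilon>0$; the goal is to produce $\vartheta'\in\mathcal{S}_\beta$ with $d_L(\vartheta,\vartheta')<\epsilon$. Using density of $\mathcal{L}(\mathbb{R}^m)$ in $(\Omega(\mathbb{R}^m),d_L)$, first pick $\theta=\sum_{j=1}^{n}a_j\delta_{x_j}\in\mathcal{L}(\mathbb{R}^m)$ with $d_L(\vartheta,\theta)<\epsilon/2$.

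For the base case $\beta=0$, the measure $\theta$ itself has support consisting of finitely many isolated points, so $\dim_L(\theta)=0$, and $\vartheta':=\theta$ suffices. For $\beta\in(0,m]$, I would fix an auxiliary compactly supported Ahlfors $\beta$-regular Borel probability measure $\mu$ on $\mathbb{R}^m$; such $\mu$ exist for every $\beta\in(0,m]$ (e.g., Lebesgue measure on $[0,1]^m$ for $\beta=m$, and the normalized $\beta$-dimensional Hausdorff measure on a self-similar set of dimension $\beta$ satisfying OSC for $\beta\in(0,m)$, cf.\ \cite{Arc}), and Ahlfors $\beta$-regularity forces $\dim_L(\mu)=\beta$. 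Rescale via $\lambda_1(E):=\mu(E/s)$ for a scale $s>0$ small enough that $|\text{supp}(\lambda_1)|<\epsilon/2$; by Lemma~\ref{1179}, $\dim_L(\lambda_1)=\dim_L(\mu)=\beta$.

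Set $\vartheta':=\theta*\lambda_1$. Lemma~\ref{convothm}, applied with $\lambda_1$ in the role of the first argument and $\theta\in\mathcal{L}(\mathbb{R}^m)$ in the role of $\omega$, gives $\dim_L(\vartheta')=\dim_L(\lambda_1)=\beta$, so $\vartheta'\in\mathcal{S}_\beta$. To bound $d_L(\vartheta,\vartheta')$, I would combine the triangle inequality with convexity of $d_L$ under convex combinations (arguing as in the proof of Theorem~\ref{densethm}): since each $\delta_{x_j}*\lambda_1$ is a translate of $\lambda_1$ supported within distance $|\text{supp}(\lambda_1)|<\epsilon/2$ from $x_j$, one obtains $d_L(\theta,\vartheta')\le\sum_{j=1}^{n}a_j\,d_L(\delta_{x_j},\delta_{x_j}*\lambda_1)\le|\text{supp}(\lambda_1)|<\epsilon/2$, and the triangle inequality then yields $d_L(\vartheta,\vartheta')<\epsilon$.

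The main obstacle is verifying that Lemma~\ref{convothm} applies with the compactly supported $\lambda_1$ in place of a strictly finitely supported measure, which is the literal reading of the lemma's hypothesis. The argument given in the proof of Lemma~\ref{convothm} extends to any compactly supported $\lambda_1$ once one observes that for indices $j$ with $x-x_j\notin\text{supp}(\lambda_1)$, the quantity $\lambda_1(\mathcal{B}_r(x-x_j))$ vanishes for all sufficiently small $r$, so the required ratio inequality holds trivially for those terms, while for the remaining indices the defining inequality for $\dim_L(\lambda_1)$ directly applies. Once this extension is granted, the rest of the argument is a routine combination of Lemmas~\ref{convothm} and~\ref{1179} with the triangle inequality for $d_L$.
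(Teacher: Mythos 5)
Your argument is correct and follows essentially the same route as the paper's own proof: approximate $\vartheta$ by $\theta\in\mathcal{L}(\mathbb{R}^m)$, convolve $\theta$ with a rescaled auxiliary measure of lower dimension $\beta$, and combine Lemmas~\ref{1179} and~\ref{convothm} with the triangle inequality for $d_L$. The only cosmetic differences are that you realize the auxiliary measure as an Ahlfors $\beta$-regular measure rather than citing \cite{Hare,Suom} for non-emptiness of $\mathcal{S}_\beta$, and that you explicitly note and repair the mismatch between the ``finite support'' hypothesis of Lemma~\ref{convothm} and its application here to a compactly supported measure --- a point the paper's proof glosses over.
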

\begin{proof}
 Observe first that \(\mathcal{S}_{\beta}\) is non‑empty for every \(\beta\in[0,m]\). Indeed, let \(A\subset\mathbb{R}\) be compact with \(m\ge \dim_L(A)>0\). Then for each \(\alpha\in[0,\dim_L(A)]\), there exists a probability measure \(\vartheta\) supported on \(A\) with \(\dim_L(\vartheta)=\alpha\) (see \cite[Theorem 4.1]{Hare}). Moreover, this result holds in any complete metric space \cite{Suom}. Consequently, \(\mathcal{S}_{\beta}\neq\emptyset\) for all \(\beta\in[0,m]\). Let $\vartheta \in \Omega({\mathbb{R}^m})$ and $\epsilon>0$, then by the density of $\mathcal{L}({\mathbb{R}}^m)$ in $\Omega({\mathbb{R}^m}),$ there exists a measure $\theta \in \mathcal{L}({\mathbb{R}}^m)$ such that $d_L(\vartheta,\theta)< \epsilon.$ Note that any Dirac measure or finite combination thereof has lower dimension zero, so $\dim_L(\theta)=0.$. This implies that the assertion is true for $\beta=0.$ Now assume that $\beta \in (0,m]$, then for any $\epsilon >0$ there exists $\theta \in \mathcal{L}(\mathbb{R}^m)$ such that $d_L(\vartheta,\theta)< \frac{\epsilon}{2}.$ Further, for each $\beta$ there exists $\lambda \in \mathcal{S}_{\beta}$ with compact support $K \subset \mathbb{R}^m,$ and $\dim_L(\lambda)= \beta.$ Define $\lambda_1 (A) := \lambda \big( \frac{2|K| A}{\epsilon}\big),$ then by Lemma \ref{1179}, $\dim_L(\lambda_1)= \dim_L(\lambda),$ and thus $\lambda_1 \in \mathcal{S}_{\beta}.$ Next, define the convolution $\lambda^*= \theta * \lambda_1.$ By Lemma \ref{convothm}, we obtain $\dim_L(\lambda^*)= \dim_L(\lambda_1)= \dim_L(\lambda)= \beta.$ This implies that $\lambda^* \in \mathcal{S}_{\beta}.$ On similar lines of \cite[Theorem 3.19]{VAM}, we get $d_L(\theta, \lambda^*)\le \frac{\epsilon}{2}.$ Therefore,
 $$d_L(\vartheta,\lambda^*) \le d_L(\vartheta,\theta)+ d_L(\theta, \lambda^*) < \epsilon.$$ This completes the proof of the theorem.
\end{proof}


\begin{remark}\label{rem24}
   It is known that for any closed set \( E \subset \mathbb{R}^m \), the lower dimension satisfies $\dim_L(E) = \sup\{ \dim_L(\vartheta) : \operatorname{supp}(\vartheta) = E \}$ (see \cite[Theorem 4.1.3]{Fraser}). In this context, it is worth emphasizing that if \( E \) is a closed set containing isolated points, then any Borel probability measure \( \vartheta \) supported on \( E \) satisfies \( \dim_L(\vartheta) = 0 \). In particular, this implies that the lower dimension of any discrete Borel probability measure is zero; that is, \( \dim_L(\vartheta) = 0 \) for any such \( \vartheta \).
 \end{remark}

    
In the following theorem, we prove the relationship between the quantization dimensions of two measures when one is absolutely continuous with respect to the other. A similar result was proved by Zhu in his thesis \cite{Zhu2}; however, our approach is different.
\begin{theorem}
Let $\vartheta, \theta \in \Omega(\mathbb{R}^m)$ such that $\vartheta$ is absolutely continuous with respect to $\theta$ with $\mathcal{L}^{\infty}$-density. Then, $ D_r(\vartheta) \le D_r(\theta).$
\end{theorem}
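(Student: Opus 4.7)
The plan is to bootstrap the comparison from the integral definition of the quantization error $V_{n,r}$. Let $g := \frac{d\vartheta}{d\theta}$ denote the Radon-Nikodym derivative and set $C := \|g\|_{\mathcal{L}^{\infty}} < \infty$ by hypothesis. Since $g$ is bounded $\theta$-a.e. by $C$, for any Borel measurable nonnegative function $f$ on $\mathbb{R}^m$ we have $\int f \, d\vartheta = \int f g \, d\theta \le C \int f \, d\theta$. Applying this to $f(x) = d(x,A)^r$ for any $A \subset \mathbb{R}^m$ with $\text{Card}(A) \le n$, then taking the infimum over all such $A$, yields the clean estimate
\begin{equation*}
V_{n,r}(\vartheta) \le C \cdot V_{n,r}(\theta) \quad \text{for every } n \in \mathbb{N}.
\end{equation*}

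Next I would pass to the quantization dimension. Taking $-\log$ of the above inequality gives $-\log V_{n,r}(\vartheta) \ge -\log V_{n,r}(\theta) - \log C$. Hence
\begin{equation*}
\frac{r \log n}{-\log V_{n,r}(\vartheta)} \le \frac{r \log n}{-\log V_{n,r}(\theta) - \log C},
\end{equation*}
valid for all sufficiently large $n$ so that the denominator on the right is positive. Since $D_r(\theta)$ exists by our standing assumption, we have $V_{n,r}(\theta) \to 0$, so $-\log V_{n,r}(\theta) \to \infty$. Consequently, the additive constant $\log C$ is asymptotically negligible, and
\begin{equation*}
\lim_{n \to \infty} \frac{r \log n}{-\log V_{n,r}(\theta) - \log C} = \lim_{n \to \infty} \frac{r \log n}{-\log V_{n,r}(\theta)} = D_r(\theta).
\end{equation*}
Passing to the limit in the previous inequality yields $D_r(\vartheta) \le D_r(\theta)$, as required.

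There is no real obstacle here; the only subtlety is the direction of the sign of $\log C$. If $C \ge 1$ then $\log C \ge 0$ and the denominator on the right is strictly smaller than $-\log V_{n,r}(\theta)$, but still tends to $+\infty$; if $C < 1$ the denominator is even larger and the estimate is only strengthened. In either case the limit is $D_r(\theta)$, so the argument is robust. The essential point is that an $\mathcal{L}^{\infty}$-density comparison converts the infimum defining $V_{n,r}$ pointwise into a multiplicative comparison, and multiplicative constants vanish in the logarithmic scaling defining $D_r$.
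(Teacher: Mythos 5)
Your proposal is correct and follows essentially the same route as the paper: both derive the pointwise comparison $V_{n,r}(\vartheta) \le \|g\|_{\mathcal{L}^{\infty}} V_{n,r}(\theta)$ from the Radon--Nikodym density bound and then pass to the limit in the definition of $D_r$, where the multiplicative constant is absorbed by the logarithm. Your explicit remark that $-\log V_{n,r}(\theta)\to\infty$ makes the additive $\log C$ negligible is a slightly more careful justification of the final limit than the paper gives, but it is the same argument.
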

\begin{proof}
By Radon-Nikodym theorem and hypothesis taken in the theorem, there exists a measurable $\mathcal{L}^{\infty}$-function $g: \mathbb{R}^m \to [0, \infty) $ such that $$ \vartheta(A)= \int_A g d\theta,~~~\text{for any Borel set}~A \subset \mathbb{R}^m.$$
Now, we have
\begin{equation*}
 \begin{aligned}
 V_{n,r}(\vartheta)= & \inf \Big\{\int d(x, A)^r d\vartheta(x): A \subset \mathbb{R}^m, \, \text{Card}(A) \le n\Big\}\\ = &\inf \Big\{\int d(x, A)^r g(x) d\theta(x): A \subset \mathbb{R}^m, \, \text{Card}(A) \le n\Big\} \\ \le & ~\|g\|_{\mathcal{L}^{\infty}} \inf \Big\{\int d(x, A)^rd\theta(x) : A \subset \mathbb{R}^m, ~ \text{Card}(A) \le n\Big\} \\   = & ~ \|g\|_{\mathcal{L}^{\infty}} V_{n,r}(\theta),
 \end{aligned}
 \end{equation*}
 where $\|g\|_{\mathcal{L}^{\infty}}$ represents essential supremum norm of $g$ with respect to measure $\theta.$ 
 This further yields
  $$D_r(\vartheta)= \lim_{n \to \infty} \frac{r \log n}{- \log \big(V_{n,r}(\vartheta)\big)} \le \lim_{n \to \infty} \frac{r \log n}{- \log \big(\|g\|_{\mathcal{L}^{\infty}}V_{n,r}(\theta)\big)} = D_r(\theta).$$
  Thus, the proof of the theorem is established.
\end{proof}

\begin{remark}\label{Remuse1}
Let $\vartheta, \theta \in \Omega(\mathbb{R}^m).$ Denoting $\vartheta \le C \theta$ for some $C>0$, if $\vartheta(A) \le C \theta(A)$ holds for every Borel set $A \subset \mathbb{R}^m.$
Note that $\vartheta \le C \theta $ if and only if $\vartheta << \theta$ with an $\mathcal{L}_{\infty}$-density. Further, it is immediate that $$ D_r(\vartheta) \le D_r(\theta)~~ ~~~~\text{whenever}~ \vartheta \le C \theta.$$
\end{remark}

In the following examples, we demonstrate that there are measures for which the lower dimension, the Hausdorff dimension, and the quantization dimension exist and are equal. We also give an example of a probability measure for which these three dimensions are different.
\begin{example}
  Let $\vartheta = \sum_{i=1}^N a_i \delta_{x_i},$ where $a_i >0, \sum_{i=1}^N a_i = 1.$ Then we can observe that if we choose the set $A = \{x_1, x_2, \ldots, x_N\}$ in the definition of the quantization error, then for any $r>0$ and $n \ge N,$ the quantization error $nth$ of order $r$ is zero. In addition, its quantization dimension is zero. Also, by Remark \ref{rem24} and inequality $\dim_L(\vartheta) \le \dim_{\mathcal{H}}(\vartheta) \le D_r(\vartheta),$ we have $\dim_L(\vartheta) = \dim_{\mathcal{H}}(\vartheta) = D_r(\vartheta)=0.$
\end{example}
\begin{remark}
 In this way, for every measure belonging to $\mathcal{L}(\mathbb{R}^m)$, these three dimensions are equal to zero.   
\end{remark}
\begin{example}
Let \( \mathcal{I} = \{ f_1(x) = \frac{x}{3},\ f_2(x) = \frac{x}{3} + \frac{2}{3} \} \) be a self-similar iterated function system (IFS) consisting of two similarity mappings \( f_1 \) and \( f_2 \), each with the similarity ratio \( c_1 = c_2 = \frac{1}{3} \). Consider the probability vector \( \{ p_1 = \frac{1}{3},\ p_2 = \frac{2}{3} \} \). Graf and Luschgy \cite{GL1} showed that for any \( r > 0 \), the quantization dimension of order \( r \) exists for the invariant self-similar measure $\vartheta$ associated with an IFS satisfying the OSC, and is determined by the equation
\[
\sum_{i=1}^N (p_i c_i^r)^{\frac{D_r}{D_r +r}} =1.
\]
Observe that IFS $\mathcal{I}$ satisfies the SSC and hence the OSC, the quantization dimension $D_r$ of the self-similar measure $\vartheta$ corresponding to the IFS $\mathcal{I}$ with probability vector $\{p_1,p_2\}$ is determined by the following equation.
\[
\left[\frac{1}{3} \left(\frac{1}{3}\right)^r\right]^{\frac{D_r}{D_r + r}} + \left[\frac{2}{3} \left(\frac{1}{3}\right)^r\right]^{\frac{D_r}{D_r + r}} = 1.
\]
In particular, for \( r = 2 \), the quantization dimension of order 2 is approximately \( D_r \approx 0.6183 \).

Moreover, since \( \mathcal{I} \) satisfies the SSC, it follows from \cite[Theorem 7.4.1]{Fraser} that the lower dimension \( \dim_L \) of the self-similar measure \( \vartheta \) is given by
\[
\dim_L(\vartheta) = \min_i \left\{ \frac{\log p_i}{\log c_i} \right\}.
\]
Hence,
\[
\dim_L(\vartheta) = \frac{\log (2/3)}{\log (1/3)} \approx 0.3691.
\]

Additionally, since \( \mathcal{I} \) satisfies the open set condition (OSC), the Hausdorff dimension of \( \vartheta \) is given by $\dim_{\mathcal{H}}(\vartheta) = \frac{E_P}{L(P)}$ (see \cite{H}),
where \( E_P \) and \( L(P) \) denote the entropy and Lyapunov exponent of the self-similar measure \( \vartheta \), respectively, defined as $$E_P = -\sum_{i=1}^2 p_i \log p_i \quad \text{and} \quad L(P) = -\sum_{i=1}^2 p_i \log |c_i|.$$
Substituting the above values yields $\dim_{\mathcal{H}}(\vartheta) = \frac{(1/3)\log (1/3) + (2/3) \log (2/3)}{(1/3) \log (1/3) + (2/3) \log (1/3)} \approx 0.5794.$
\end{example}
\subsection{Lower dimension of invariant measure associated with product IFS} \label{subsec1}
In this subsection, we will see the relationship between the lower dimension of the invariant measure associated with the product of two IFSs and the lower dimension of the invariant measures associated with the individual IFSs. For more details about the product IFS, interested readers may see \cite{VP1}.

\begin{theorem}
    Let $\mathcal{I}_1:= \{\mathbb{R}^m; f_1,f_2,\ldots,f_N\},$ and $\mathcal{I}_2 := \{\mathbb{R}^m; g_1,g_2,\ldots,g_N\}$ be two IFSs consisting of similarity mappings with similarity ratio $c,$ for $1\le i \le N.$ Then, the lower dimension of $\vartheta := \vartheta_1 \times \vartheta_2$ is $s +t,$ where $s$ and $t$ are the lower dimensions of $\vartheta_1$ and $\vartheta_2$, respectively, provided that $\mathcal{I}_1$ and $\mathcal{I}_1$ satisfy the SSC.
\end{theorem}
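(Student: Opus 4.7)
The plan is to recognize the product measure $\vartheta = \vartheta_1 \times \vartheta_2$ as the self-similar invariant measure of a product IFS on $\mathbb{R}^m \times \mathbb{R}^m$, and then invoke the known closed form for the lower dimension of a self-similar measure under SSC (the formula $\dim_L(\mu) = \min_i \frac{\log p_i}{\log c_i}$ used in the example preceding this subsection, which is \cite[Theorem 7.4.1]{Fraser}). The key observation is that uniform contraction ratio $c$ makes the $\log$ in the denominator a common factor, so the minimum splits additively over the two coordinates.

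First I would introduce the product IFS $\mathcal{I}_1\times\mathcal{I}_2 := \{F_{i,j}: 1\le i,j \le N\}$ on $\mathbb{R}^m\times\mathbb{R}^m$, where $F_{i,j}(x,y) = (f_i(x), g_j(y))$. Each $F_{i,j}$ is a similarity with ratio $c$ (since both $f_i$ and $g_j$ contract by $c$, and the Euclidean product metric gives $\|F_{i,j}(x,y) - F_{i,j}(x',y')\|_2 = c\,\|(x,y)-(x',y')\|_2$). If $\vartheta_1$ has weights $\{p_i\}$ and $\vartheta_2$ has weights $\{q_j\}$, then the probability vector $\{p_i q_j\}_{i,j}$ together with $\{F_{i,j}\}$ uniquely determines an invariant Borel probability measure on $\mathbb{R}^{2m}$; a routine check on product Borel rectangles, together with the Hutchinson fixed-point property, shows that this invariant measure is precisely $\vartheta_1 \times \vartheta_2$.

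Second, I would verify that $\mathcal{I}_1\times\mathcal{I}_2$ satisfies the SSC. Let $E_1$ and $E_2$ denote the attractors of $\mathcal{I}_1$ and $\mathcal{I}_2$, respectively; the attractor of the product IFS is $E_1\times E_2$. For $(i,j)\neq (i',j')$ we have either $i\neq i'$ or $j\neq j'$; if $i\neq i'$ then $f_i(E_1) \cap f_{i'}(E_1) = \emptyset$ by the SSC of $\mathcal{I}_1$, whence $F_{i,j}(E_1\times E_2) \cap F_{i',j'}(E_1\times E_2) = \emptyset$, and similarly in the other case. So SSC holds for the product.

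Third, I apply \cite[Theorem 7.4.1]{Fraser} to $\vartheta = \vartheta_1 \times \vartheta_2$ as the self-similar invariant measure of $\mathcal{I}_1\times\mathcal{I}_2$ under SSC, obtaining
\begin{equation*}
\dim_L(\vartheta) \;=\; \min_{1\le i,j\le N} \frac{\log(p_i q_j)}{\log c} \;=\; \min_{1\le i,j\le N} \frac{\log p_i + \log q_j}{\log c}.
\end{equation*}
Since $\log c < 0$, the mapping $x \mapsto x/\log c$ is order-reversing, so this minimum equals $\frac{1}{\log c}\bigl(\max_i \log p_i + \max_j \log q_j\bigr) = \min_i \frac{\log p_i}{\log c} + \min_j \frac{\log q_j}{\log c} = s + t$. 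The separability in the last step relies crucially on the common contraction ratio $c$; if the ratios differed, $\log c_i$ would sit inside the minimum and the additive split would fail. That, together with a careful justification that the product measure is the invariant measure of the product system, is the only subtle point — everything else is a direct appeal to the cited formulas.
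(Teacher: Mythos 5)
Your proposal is correct and follows essentially the same route as the paper: apply \cite[Theorem 7.4.1]{Fraser} to the product IFS with weight vector $\{p_iq_j\}$ and common contraction ratio $c$, then split the minimum additively because $\log c$ is a shared (negative) denominator. The only differences are that you verify the SSC of the product system and the identification of $\vartheta_1\times\vartheta_2$ with the invariant measure of the product IFS by hand, whereas the paper cites \cite[Theorem 3.23]{VP1} for the former and leaves the latter implicit.
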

\begin{proof}
    Since IFSs $\mathcal{I}_1$ and $\mathcal{I}_2$ satisfy the SSC, by \cite[Theorem 7.4.1]{Fraser}, we have  
\[
\dim_L(\vartheta_1) = \min_{i} \frac{\log p_i}{\log c_i} \quad \text{and} \quad \dim_L(\vartheta_2) = \min_i \frac{\log q_i}{\log c_i}.
\]  
Furthermore, if the IFSs $\mathcal{I}_1$ and $\mathcal{I}_2$ satisfy the SSC, then the product IFS also satisfies the SSC (see \cite[Theorem 3.23]{VP1}). Consider the probability vector $\{p_i q_j: 1\le i,j, \le N\}$; then, by \cite[Theorem 7.4.1]{Fraser}, we have  
\[
\begin{aligned}
    \dim_L(\vartheta) &= \min_{i,j} \frac{\log p_i q_j}{\log c} = \min_{i,j} \left( \frac{\log p_i}{\log c} + \frac{\log q_j}{\log c} \right) \\
    &= \min_i \frac{\log p_i}{\log c} + \min_i \frac{\log q_i}{\log c} 
    = s + t.
\end{aligned}
\]

Thus, we have $\dim_L(\vartheta) = s+t.$
\end{proof}
\begin{note}
In \cite[Theorem 6.6]{Sh1}, it is shown that if $\{f_i\}_{i=1}^N$ is an IFS satisfying the ESC, and $\vartheta$ is a self-similar measure associated with a probability vector $\{p_i\}_{i=1}^N$, then for $q \in (1, \infty)$, the $L^q$-dimension of $\vartheta$ satisfies 
$D(\vartheta, q) = \min\left\{\frac{\tilde{\tau}(q)}{1 - q},\, 1\right\}.$
Further, \cite[Remark 1.4]{KNZ1} states that for $q \in (0,1)$, we have
$\overline{D}_r(\vartheta) = \frac{D(\vartheta, q)}{q - 1}.$ However, combining these two results-Theorem 6.6 of \cite{Sh1} and Remark 1.4 of \cite{KNZ1}-does not yield the quantization dimension of $\vartheta$. 
\end{note}

\begin{op}
The quantization dimension of a self-similar measure $\vartheta$ associated with an IFS satisfying the ESC remains an open problem.
\end{op}
\begin{op}
    To the best of our knowledge, we do not find any literature that addresses the lower dimension of self-similar sets and measures on $\mathbb{R}$ under the ESC. Thus, it is still an open problem. 
\end{op}
   

\bibliographystyle{amsalpha}

\begin{thebibliography}{A}

\bibitem{ES} E. Agrawal, S. Verma, Dimensions and Stability of invariant measures supported on fractal Surfaces, Discrete and Continuous Dynamical Systems - S, (2024) 1-27, doi: 10.3934/dcdss.2024159.
 \bibitem{AV1} E. Agrawal, S. Verma, Dimension preserving approximation and estimation: Fractal surfaces and Riemann-Liouville fractional integrals, Contemporary Mathematics (AMS), Vol. 825, (2025) 1-24.
\bibitem{AV2} E. Agrawal, S. Verma, Dimension preserving set-valued approximation and decomposition via metric sum, (2025) arXiv preprint  1-32, 
doi.org/10.48550/arXiv.2504.11356.
\bibitem{Arc} N. Arcozzi, A. Monguzzi, M. Salvatori, Ahlfors regular spaces have regular subspaces of any dimension, Riv. Math. Univ. Parma. 14(1) (2023) 19-32.



\bibitem{BV1} B. B\'ar\'any, M. Verma, Hausdorff dimension of self-similar measures and sets with common fixed point structure, arXiv preprint, (2025) arXiv:2507.05835.
\bibitem{B} M. F. Barnsley, Fractals Everywhere, 2nd edition, Academic Press, Boston, 1993.


\bibitem{Bayart} F. Bayart, Y. Heurteaux,
On the Hausdorff dimension of graphs of prevalent continuous functions on compact sets,
Further Developments in Fractals and Related Fields, Birkhäuser, Boston (2013) 25-34.


\bibitem{Bed} T. Bedford, Crinkly curves, Markov partitions and box dimensions in self-similar sets, PhD thesis, University of Warwick, 1984.


\bibitem{Bill} P. Billingsley, Convergence of Probability Measures, Wiley \& Sons, New York- London, 1968.
\bibitem{SS3} S. Chandra, S. Abbas, On fractal dimensions of fractal functions using function spaces, Bull. Aust. Math. Soc. 106 (2022) 470-480.
\bibitem{Chen} C. Chen, M. Wu, W. Wu, Accessible values for the Assouad and lower dimensions of subsets, Real Anal. Exchange 45(1) (2020) 85-100. 
\bibitem{CM} R. Cawley, R. D. Mauldin, Multifractal decompositions of Moran fractals, Adv. Math. 92 (2)
(1992) 196–236.


\bibitem{DV1} S. Dubey, S. Verma, Fractal dimension for a class of Inhomogeneous graph-directed attractors, Discrete and Continuous Dynamical Systems-S, (2024) 1-16, doi: 10.3934/dcdss.2024158.
 \bibitem{DV2} S. Dubey, S. Verma, Inhomogeneous graph-directed attractors and fractal measures, The Journal of Analysis 32 (2024) 157-170.
 \bibitem{DV3} S. Dubey, S. Verma, Dimension of inhomogeneous sub-self-similar sets, arxiv preprint (2025) submitted for publication .
\bibitem{Fal} K. J. Falconer, Fractal Geometry: Mathematical Foundations and Applications, John Wiley Sons Inc., New York, 1999.

\bibitem{Feng} D. J. Feng, N. T. Nguyen, T. Wang, Convolutions of equicontractive self-similar measures on the line, Illinois J. Math. 46(4) (2002) 1339-1351.

\bibitem{Fraser12} J. M. Fraser, Assouad type dimensions and homogeneity of fractals, Trans. Amer. Math. Soc. 366(12) (2014) 6687-6733.

\bibitem{Fraser} J. M. Fraser, Assouad dimension and fractal geometry, (Vol. 222), Cambridge University Press, 2020.

\bibitem{Fraser1} J. M. Fraser, Fractal Geometry of Bedford-McMullen Carpets, Lecture Notes in Mathematics, Vol 2290. Springer, 2021. 
     \bibitem{GP1} D. Gupta, A. Pandey, Analyzing impact of corporate governance index on working capital management through fractal functions,
Chaos, Solitons \& Fractals, 183 (2024), Article 114946.
    \bibitem{GL1} S. Graf, H. Luschgy, Foundations of quantization for probability distributions, Lecture Notes in
    Mathematics 1730, Springer, Berlin, 2000.

    

    \bibitem{GL3} S. Graf, H. Luschgy, Quantization for probability measures with respect to the geometric mean error, In Mathematical Proceedings of the Cambridge Philosophical Society 136(3) (2004) 687-717.
\bibitem{Hare} K. Hare, F. Mendivil, L. Zuberman, Measures with specified support and arbitrary Assouad dimensions, Proceedings of the American Mathematical Society 148(9) (2020) 3881-3895.
 \bibitem{MHOCHMAN} M. Hochman, On self-similar sets with overlaps and inverse theorems for entropy, Ann. Math. 180(2) (2014) 773-822.


 
\bibitem{H} J. E. Hutchinson, Fractals and self similarity, Indiana Uni. Math. J. 30(5) (1981) 713-747.
 
  \bibitem{AL} A. K\"aenm\"aki, J. Lehrb\"ack, Measures with predetermined regularity and inhomogeneous self-similar sets, Ark. Mat. 55(1) (2017) 165-184.

  \bibitem{AJM} A. Käenmäki, J. Lehrbäck, M. Vuorinen, Dimensions, Whitney covers, and tubular neighborhoods, Indiana Univ. Math. J. 62(6) (2013) 1861-1889. 
  
 
\bibitem{KNZ1} M. Kesseböhmer, A. Niemannn, S. Zhu, 
Quantization dimensions of compactly supported probability measures via R\'enyi dimensions,
Trans. Amer. Math. Soc. 376 (2023) 4661-4678.
\bibitem{KZ2} M. Kesseböhmer, S. Zhu,
On the quantization for self-affine measures on Bedford-McMullen carpets,
Mathematische Zeitschrift  283(1) (2015) 39-58.
\bibitem{K2} M. Kesseböhmer, A. Niemann, Quantization dimensions of negative order, (2024) arXiv preprint arXiv:2405.13387.
 \bibitem{KBV11} A. Kumar, S. Boulaaras, S. K. Verma and M. Biomy, Non-stationary fractal functions on the Sierpi\'nski gasket, Mathematics, 12 (2024), 3463. doi: 10.3390/math12223463.

 \bibitem{KBV12} A. Kumar, S. K. Verma, and S. Boulaaras, On $\alpha$-fractal functions and their applications to analyzing the S \& P BSE Sensex in India, Chaos, Solitons \& Fractals, 186 (2024), Paper No. 115194, 8 pp.
\bibitem{RSP} R. Lal, S. Chandra, A. Prajapati, Fractal surfaces in Lebesgue spaces with respect to fractal measures and associated fractal operators, Chaos, Solitons \& Fractals 181 (2024), 114684, 7 pp.

\bibitem{Larman} D. G. Larman, A new theory of dimension, Proc. Lond. Math. Soc. 3(1) (1967) 178-192.
\bibitem{Liu} J. Liu, J. Wu,
     A remark on decomposition of continuous functions, J. Math. Anal. Appl. 401 (2013) 404-406.
 \bibitem{Mauldin} R. D. Mauldin, S. C. Williams, On the Hausdorff dimension of some graphs, Trans. Amer. Math. Soc. 298(2) (1986) 793-803.
     \bibitem {Mat} P. Mattila, Geometry of Sets and Measures in Euclidean Spaces, Cambridge University Press 1995.
     \bibitem {Mat3} P. Mattila, Fourier analysis and Hausdorff dimension, Cambridge University Press, Cambridge, 2015.
    \bibitem{MP1} M. Megala, S. A. Prasad, Spectrum of a Self-Affine Measure with Four-Element Digit Set, Fractals 30 (04), (2022) 2250087.
\bibitem{MP2} M. Megala, S. A. Prasad, Spectrum of self-affine measures on the Sierpinski family, Monatshefte f\"ur Mathematik 204(1) (2024) 157-169.
\bibitem{MP3} M. Megala, S. A. Prasad, Spectrality of certain self-affine measures, Contemporary Mathematics (AMS), Vol. 825, (2025) 1-10.
      \bibitem {Parth} K. R. Parthasarathy, Probability measures on metric spaces, Academic
      Press, New York -London, 1967.
    
      \bibitem{KPot} K. P\"otzelberger, The quantization dimension of distributions, Math. Proc. Cambridge Philos. Soc. 131 (2001) 507-519.
      







\bibitem{PVV1} A. Priyadarshi, M. Verma, S. Verma, Fractal dimensions of fractal transformations and quantization dimensions for bi-Lipschitz mappings, J. Fractal Geom. 12(1/2) (2024) 1-33.



\bibitem{Schief} A. Schief, Separation properties for self-similar sets, Proc. Amer. Math. Soc. 122(1) (1994) 111-115.

\bibitem{Sh1} P. Shmerkin, On Furstenberg's intersection conjecture, self-similar measures, and the $L^q$ norms of convolutions, Ann. of Math. 189(2) (2019) 319-391.



\bibitem{Simon} K. Simon, Overlapping cylinders: the size of a dynamically defined Cantor-set, In Ergodic theory of $Z^d$ actions (Warwick, 1993-1994), volume 228 of London Math. Soc. Lecture Note Ser., pages 259--272. Cambridge University Press, Cambridge, 1996.

\bibitem{Suom} V. Suomala, Intermediate value property for the Assouad dimension of measures, Analysis and Geometry in Metric Spaces 8(1) (2020) 106-113.


\bibitem{PPV1} P. P. Varj\'u, On the dimension of Bernoulli convolutions for all transcendental parameters, Ann. of Math. (2) 189(3) (2019) 1001-1011.

\bibitem{MV1} M. Verma, Dimensional approximation of inhomogeneous attractor without any separation condition, Bull. Aust. Math. Soc. (2025) 1-11.


     \bibitem{VAM} S. Verma, E. Agrawal, M. Megala, Some remarks on the exponential separation and dimension preserving approximation for sets and measures, (2025), 1-19, arXiv preprint, doi.org/10.48550/arXiv.2508.07692
\bibitem{VP1} S. Verma, A. Priyadarshi, Further analysis of Hausdorff dimension and separation conditions, Contemporary Mathematics (AMS), Vol. 825, (2025) 1-14, doi.org/10.1090/conm/825/16517.
 \bibitem{VM1} S. Verma, P. R. Massopust, Dimension preserving approximation, Aequat. Math. 96(6) (2022) 1233-1247.
    \bibitem{Zador} P. L. Zador, Asymptotic quantization error of continuous signals and the
     quantization dimension, IEEE Trans. Inform. Theory 28 (1982) 139-149.
    
    \bibitem{Zhu1} S. Zhu, The quantization for self-conformal measures with respect to the geometric mean error, Nonlinearity 23(11) (2010) 2849.
\bibitem{Zhu2} S. Zhu, Quantization for probability measures, PhD diss., Universität Bremen, 2005.
\end{thebibliography}

\end{document}